\documentclass[12pt]{amsart}
\pdfoutput=1
\usepackage{latexsym, amsmath, amssymb, amsthm, bm}
\usepackage{calrsfs}
\usepackage{subfig}

\usepackage[mathscr]{eucal}
\usepackage{mathtools}
\usepackage{paralist}
\usepackage[numeric]{amsrefs}
\usepackage[T1]{fontenc}
\usepackage{mathptmx}
\usepackage{microtype}
\usepackage{tikz}
\usepackage{placeins}

\usepackage[colorlinks=true,linkcolor=blue,urlcolor=blue, citecolor=blue]%
    {hyperref}
\usepackage[paperwidth=8.5in, paperheight=11in, inner=2.5cm, outer=2.5cm,%
    top=2.5cm, bottom=2.5cm]{geometry}
\newtheorem{lemma}{Lemma}[section]
\newtheorem{theorem}[lemma]{Theorem}

\theoremstyle{definition}

\newtheorem{remark}[lemma]{Remark}
\newtheorem{examplex}[lemma]{Example}

\newtheorem{algorithm}[lemma]{Algorithm}
\newtheorem{defi}[lemma]{Definition}

\renewcommand{\geq}{\geqslant}
\renewcommand{\leq}{\leqslant}

\newcommand{\NN}{\ensuremath{\mathbb{N}}}
\newcommand{\PP}{\ensuremath{\mathbb{P}}} 
 
\newcommand{\RR}{\ensuremath{\mathbb{R}}} 
\newcommand{\EE}{\ensuremath{\mathbb{E}}} 
\newcommand{\FF}{\ensuremath{\mathbb{F}}}

\usepackage[foot]{amsaddr}

\title{Optimal small-set expander graphs and their codes}

\author{Tristram Bogart$^{1}$}
\address{$^{1}$ Departamento de Matemática, Universidad de los Andes, Bogotá, Colombia. ORCID: 0000-0002-1589-8584}
\email{tc.bogart22@uniandes.edu.co}

\author{Marcelo Fiori$^{2}$}
\address{$^{2}$ Instituto de Matemática y Estadística Rafael Laguardia, Universidad de la República, Montevideo, Uruguay. ORCID: 0000-0002-3732-1778.}
\email{mfiori@fing.edu.uy}

\author{Pedro Raigorodsky$^{3}$}
\address{$^{3}$ Instituto de Estadística (IESTA), Universidad de la República, Montevideo, Uruguay}
\email{pedro.raigorodsky@fcea.edu.uy}

\author{Mauricio Velasco$^{4}$}
\address{$^{4}$ Centro de Matemática, Universidad de la República, Montevideo, Uruguay. ORCID : 0000-0003-4787-7910}
\email{mvelasco@cmat.edu.uy}

\keywords{Expander codes, small set expansion, LDPC codes, post-quantum cryptography.}

\begin{document}

\begin{abstract} A left-regular bipartite graph $G$ of degree $d$ is called a {\it $(t,\alpha)$-small-set-expander} if every subset $X$ of left vertices of size at most $t$ has at least $\alpha |X|$ neighbors. Such a graph is an optimal small-set expander if small subsets have as many neighbors as possible. We characterize optimal expanders combinatorially via girth and prove the existence of $s$-optimal expanders for every $s$. We also prove that $s$-optimality yields new "transfer" lower bounds on the number of neighbors of sets of size $h\geq s$. Finally, as an application, we discuss the use of optimal small-set expanders in building good codes for key exchange protocols in post-quantum cryptography.
\end{abstract}

\medskip

\maketitle

\medskip

\section{Introduction}

A bipartite graph $G$ with parts $L\sqcup R$ is {\it left-regular of degree $d$} if every vertex in $L$ has exactly $d$ incident edges. We write $n:=|L|$ and $m:=|R|$ and associate to any such graph a binary linear code $B(G)\subseteq \FF_2^n$ defined as
\[B(G):=\left\{(x_1,\dots,x_n)\in \FF_2^{n}: \sum_{j\in N(r)} x_j = 0\text{ for all $r\in R$}\right\}\] 
where $N(z)$ denotes the set of neighbors of a vertex $z\in G$. Understanding how the combinatorics of the graph $G$ affects the properties of the code $B(G)$ is a very natural question and one of the main sources of interesting subspaces of $\FF_2^n$ from the point of view of coding theory and its applications. In particular, the following property has played a key role in the development of the interactions between coding theory, theoretical computer science and discrete mathematics since its introduction by Margulis in the late 70s,

\begin{defi}Let $\alpha,\gamma$ be real numbers. A left-regular bipartite graph $G$ of degree $d$ is called a {\it $(\gamma,\alpha)$-expander} if every subset $X\subseteq L$ of size at most $\gamma n$ has at least $\alpha |X|$ neighbors.
\end{defi}

It is known that if $G$ is a suitable expander then $B(G)$ admits extremely efficient decoding algorithms, with decoding radius determined by the expansion parameters. Furthermore it is known that good expander graphs are ubiquitous among sufficiently large graphs.  More precisely, it is immediate from our left-regularity assumption that the inequality $\alpha\leq d$ holds for every such $G$ and it is natural to ask whether there exist expanders approaching the equality. A Theorem of Sipser and Spielman~\cite{SiSp} shows that for every $\gamma\leq 1/2$ and $\epsilon>0$, a uniformly chosen $d$-regular bipartite graph is a $(\gamma, d(1-\epsilon))$-expander with probability which increases (exponentially) toward one as the number $n$ of left vertices increases to infinity. Furthermore, there exist many deterministic constructions of expander graphs~\cites{ReVaWi, Golowich24}, a problem of interest since the earliest works~\cites{Mar73,Mar88}. To the best of our knowledge most constructions aim towards building expander graph families, that is sequences of graphs $(G_n)_{n\in \mathbb{N}}$ with vertex size going to infinity which are $(\gamma,\alpha)$ expanders for fixed parameters $\gamma,\alpha\in \RR$. 

For certain applications, however, this particular asymptotic regime is not necessarily the only useful viewpoint on expanders. We would prefer to vary the number of vertices $t$ in the allowed subsets (letting $t$ be any number, typically smaller than a linear fraction of $n$) and ask for the smallest number of neighbors any such subset can have, leading to the following finer vector-valued invariant, already featured, although not prominently, in the seminal work~\cite{SiSp},

\begin{defi} Let $G$ be a left-regular bipartite graph with parts $L\sqcup R$. For an integer $t\leq n$  we define the $t$-th expansion constant $\alpha_G(t)$ of $G$ as the number
\[\alpha_G(t):=\min\left\{\frac{|N(X)|}{|X|}: X\subseteq L\text{ , } 0<|X|\leq t\right\}\]
\end{defi}

The aim of this article is to define \emph{optimal small-set expanders}, those bipartite graphs for which the expansion constants are as large as possible for small sets. We characterize these graphs combinatorially, and give some constraints on the parameter ranges in which they are allowed/guaranteed to exist. As an application we use them to define novel codes potentially useful for code-based cryptography. Our results show that optimality is not only theoretically interesting but also a property capable of yielding some desireable post-quantum security guarantees.

More precisely, we begin our investigation by observing that the expansion constants $\alpha_G(t)$ satisfy the following simple upper bounds,

\begin{lemma}\label{Lem: UpperBound} If $G$ is a left-regular bipartite graph of degree $d\geq 2$ with $n\geq m$ then the inequality $\alpha_G(2)\leq d-1/2$ holds. Furthermore if $\alpha_G(2)= d-1/2$ then for every integer $t\leq d$ we have
\[\alpha_G(t)\leq (d-1)+\frac{1}{t}.\]
\end{lemma}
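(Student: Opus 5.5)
The plan is to exhibit, for each $t\leq d$, a single explicit set $X\subseteq L$ with $|X|=t$ whose neighborhood is small. This suffices because $\alpha_G(t)$ is a minimum over all nonempty sets of size at most $t$. The natural candidate is a set of left vertices that all share one common right neighbor, and the hypothesis $n\geq m$ will be used, via double counting, to show that such a set exists.

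\emph{Step 1: a right vertex of large degree.} Count the edges of $G$ in two ways: left-regularity gives $\sum_{r\in R}\deg(r)=dn$. Hence the average right degree is $dn/m\geq d$, using $n\geq m$. So there is a vertex $r\in R$ with $\deg(r)\geq d$, and therefore $\deg(r)\geq t$ for every $t\leq d$.

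\emph{Step 2: bound the neighborhood of $t$ neighbors of $r$.} Choose $X\subseteq N(r)$ with $|X|=t$. Each $x\in X$ has exactly $d$ neighbors, and $r$ is one of them. Therefore
\[|N(X)|\leq 1+\sum_{x\in X}|N(x)\setminus\{r\}| = 1+t(d-1),\]
so $|N(X)|/|X|\leq (d-1)+1/t$. This gives $\alpha_G(t)\leq (d-1)+\frac1t$ for all $1\leq t\leq d$; the case $t=1$ is trivial, since $\alpha_G(1)=d$.

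\emph{Step 3: the first claim.} Specializing Step 2 to $t=2$, which is allowed because $d\geq 2$, yields $\alpha_G(2)\leq d-\frac12$. Equivalently, some two left vertices share a right neighbor: otherwise their neighborhoods would be pairwise disjoint, forcing $m\geq dn>n$. Thus the second claim in fact holds without the hypothesis $\alpha_G(2)=d-1/2$. That hypothesis only serves to frame the bound as the next constraint once the $t=2$ bound is attained, since it says that two left vertices share at most one neighbor.

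I do not expect a real obstacle. The only point needing care is Step 1: the condition $t\leq d$ is exactly what the averaging argument guarantees for the maximal right degree, and this is why the statement is restricted to $t\leq d$.
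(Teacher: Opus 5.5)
Your proof is correct and has the same skeleton as the paper's. Both use double counting to find $r^*$ with $|N(r^*)|\geq d$, take $t$ of its left neighbors as the test set $X$, and obtain the first claim as the case $t=2$.

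The difference is in Step 2. The paper uses the hypothesis $\alpha_G(2)=d-1/2$, i.e.\ that two left vertices share at most one right neighbor, to conclude that $r^*$ is the only common neighbor. It then computes $|N(X)|=td-(t-1)$ exactly. You use only the union bound $|N(X)|\leq 1+t(d-1)$, which needs nothing about pairs.

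Your route therefore shows that the second inequality holds for every left-regular graph with $d\geq 2$ and $n\geq m$. By the same argument it holds for all $t$ up to the maximal right degree, matching the remark after the paper's proof. So the hypothesis $\alpha_G(2)=d-1/2$ is superfluous for the stated inequality. The paper's exact count adds only that, under $2$-optimality, this particular star-shaped set attains the bound with equality, and the upper bound does not need that fact.
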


Motivated by the previous inequality we say that a bipartite $d$-regular graph $G$ is an \emph{$s$-optimal small-set expander} if the expansion constants $\alpha_G(t)$ satisfy the opposite inequality $\alpha_G(t)\geq (d-1)+\frac{1}{t}$ whenever $t\leq s$, (for a given integer $s$, which is allowed to be larger than $d$). Our first Theorem shows that optimality is a surprisingly structural property. Recall that the \emph{girth} $g(G)$ of a graph is the number of vertices of the shortest simple cycle contained in $G$. 

\begin{theorem}\label{Thm: girth} Let $G$ be a left-regular bipartite graph of degree $d$. For any integer $s$, with $2\leq s\leq n$ the following properties are equivalent:
\begin{enumerate}
\item The inequality $g(G)>2s$ holds.
\item $G$ is an $s$-optimal small-set expander, that is
$\alpha_G(t)\geq (d-1)+\frac{1}{t}$ for every integer $t\leq s$.
\end{enumerate}
\end{theorem}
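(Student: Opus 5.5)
The plan is to reduce both directions to one counting identity for the subgraph spanned by a set of left vertices and its neighbourhood. For a nonempty $X\subseteq L$ with $|X|=t$, let $H_X$ be the subgraph of $G$ with vertex set $X\cup N(X)$ and edge set all edges of $G$ incident to $X$. By left-regularity $H_X$ has exactly $dt$ edges and $t+|N(X)|$ vertices. Every cycle of $G$ whose left vertices all lie in $X$ is a cycle of $H_X$. Conversely, every cycle of $H_X$ has all its left vertices in $X$, since every edge of $H_X$ has its left endpoint in $X$. Throughout I assume $G$ is simple, so every cycle has length $2k$ with $k\geq 2$ and passes through exactly $k$ left vertices.

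\emph{(1)$\Rightarrow$(2).} Assume $g(G)>2s$ and let $X\subseteq L$ with $|X|=t\leq s$. A cycle in $H_X$ would have length $2k$ with $k\leq t\leq s$, hence length at most $2s$, which is impossible. So $H_X$ is a forest, say with $c\geq 1$ components. Then
\[dt=|E(H_X)|=t+|N(X)|-c,\]
so $|N(X)|=(d-1)t+c\geq (d-1)t+1$. This gives $|N(X)|/|X|\geq (d-1)+1/|X|$. To get the bound on $\alpha_G(t)$, take any $X$ with $0<|X|=t'\leq t$. Then
\[\frac{|N(X)|}{|X|}\geq (d-1)+\frac{1}{t'}\geq (d-1)+\frac1t,\]
so $\alpha_G(t)\geq (d-1)+1/t$ for every $t\leq s$.

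\emph{(2)$\Rightarrow$(1).} I argue by contraposition. Suppose $G$ has a cycle $C$ of length $2k\leq 2s$, and let $X$ be the set of its $k$ left vertices, so $2\leq k\leq s\leq n$. Then $C\subseteq H_X$. Moreover $H_X$ is connected: the vertices of $X$ are joined along $C$, and every vertex of $N(X)$ is adjacent to some vertex of $X$. A connected graph containing a cycle has at least as many edges as vertices, so $dk\geq k+|N(X)|$, that is $|N(X)|\leq (d-1)k$. Hence
\[\alpha_G(k)\leq d-1<(d-1)+\frac1k,\]
which contradicts (2) at $t=k\leq s$.

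The only point needing care is the second direction. There the edge-count inequality alone is not enough, because a disconnected graph can have fewer edges than vertices and still contain a cycle. The fix is to choose $X$ to be exactly the left vertices of the short cycle, which forces $H_X$ to be connected. Everything else is the standard Euler-type count for forests and for connected graphs.
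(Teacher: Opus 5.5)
Your proposal is correct and follows essentially the paper's route: a forest/Euler-characteristic count on $H_X$ for (1)$\Rightarrow$(2), and, for the converse, taking $X$ to be the $k$ left vertices of a cycle of length $2k\le 2s$ to get $|N(X)|\le (d-1)k$. The only differences are that you derive this last bound from ``connected with a cycle implies $|E(H_X)|\ge |V(H_X)|$'' where the paper notes that each right vertex $r_i$ of the cycle is counted at least twice in $\sum_i |N(v_i)|=dk$, and that you make explicit the step for sets of size $t'<t$, which the paper leaves implicit.
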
 

Requiring for high girth for LDPC Tanner codes has been a well known paradigm in several current constructions (for instance, see \cite{Rabeti} for constructions of expanders with girth $8$), but the focus tends to be on specific target girths. To the best of our knowledge, the Theorem above formalizes this intuition explicitly for the first time. Further, because girth can be computed in polynomial time, this is a condition that can be checked and even be used as a basis to build expander graphs.

It is natural to ask whether $s$-optimal graphs exist. The following example gives several families of $2$-optimal small-set expanders from the line-point incidences of projective spaces over finite fields. Let $q$ denote a pure prime power, let $\FF_q$ be the unique field of size $q$ and let $\PP^k = \PP^k(\FF_q)$ be the set of $\FF_q$-points in $k$-dimensional projective space over $\FF_q$. By a line in $\PP^k$ we mean a one-dimensional projective subspace of $\PP^k$ defined by linear equations with coefficients in $\FF_q$.

\begin{examplex} \label{example: PS} Fix a positive integer $k$. Let $L$ be the set of lines in $\PP^k$ and let $R$ be the set of points in $\PP^k$. Let $G$ be the graph with vertex set $L\sqcup R$ and an edge $(\ell,v)$ if and only if the point $v$ belongs to the line $\ell$. The following statements hold:
\begin{enumerate}
\item The graph $G$ has the following parameters:
\[
\begin{tabular}{|c|c|}
\hline
symbol & value\\
\hline
$d$ & $q+1$\\
$n$ & $(q^{k+1}-1)/(q-1)$\\
$m$ & $[(q^{k+1}-1)(q^k-1)]/[(q-1)^2(q+1)]$\\
\hline
  \end{tabular}
\]
\item For every $t \geq 2$ the equality $\alpha_G(t)=(q+1)-\frac{t-1}{2}$ holds and in particular, setting $t=2$, we conclude that $G$ is a $2$-optimal small-set-expander.
\end{enumerate}
\end{examplex}

The graphs from the previous Example are never $3$-optimal. However, our next Theorem gives us a construction of $s$-optimal small set expanders for every $s$. Such graphs are obtained by selecting random subgraphs of any sufficiently rich family of $2$-optimal expanders such as that of the previous example. The result is an application of the probabilistic method with alterations, which can be implemented computationally:

\begin{theorem}\label{Thm: s_optimal_existence} Let $(G_k)_{k\in\mathbb{N}}$ be a family of $2$-optimal expander graphs with $G_k=(L_k\sqcup R_k; E_k)$ having left degree $d_k$ and denote $n_k:=|L_k|$ and $m_k:=|R_k|$. Assume $\frac{m_k^2}{n_k}$ is a bounded sequence and $m_k\rightarrow \infty$.
For all $s\in \mathbb{N}$, $s\geq 3$ and $c\in \RR$, $c>1$ there is an index $k^*\in \mathbb{N}$ such that, for every $k\geq k^*$ the graph $G_k$ contains a bipartite subgraph $A_k$ with parts $L(A_k)\sqcup R(A_k)$ satisfying:
\begin{enumerate}
\item $A_k$ is left-regular of degree $d_k$
\item $|L(A_k)|\geq cm_k\geq c|R(A_k)|$ 
\item $A_k$ is an $s$-optimal small-set expander.
\end{enumerate}
\end{theorem}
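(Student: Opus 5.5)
The plan is to reduce $s$-optimality to a girth condition via Theorem~\ref{Thm: girth}, and then obtain large girth by the probabilistic method with alterations. By Theorem~\ref{Thm: girth} with $s=2$, each $G_k$ has girth $g(G_k)>4$, so it has no $4$-cycles. Equivalently, any two right vertices of $G_k$ have at most one common left neighbor. Conversely, any bipartite graph $A_k$ with girth $>2s$ is an $s$-optimal small-set expander, provided it is left-regular of degree $d_k$ and $s\leq |L(A_k)|$; the latter holds for large $k$ since $|L(A_k)|\to\infty$.

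To keep left-regularity automatic, $A_k$ will be the subgraph induced by a subset $L'\subseteq L_k$ together with $R(A_k):=N(L')\subseteq R_k$. Every kept left vertex keeps all $d_k$ of its edges, which gives (1) and also $|R(A_k)|\leq m_k$.

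\textbf{Step 1 (counting short cycles).} For $3\leq j\leq s$ I will bound the number $C_{2j}$ of $2j$-cycles in $G_k$ by $m_k^j$. A $2j$-cycle is determined by its cyclic sequence of $j$ right vertices $r_1,\dots,r_j$, because the left vertex between $r_i$ and $r_{i+1}$ is a common neighbor of the two. Since $G_k$ has no $4$-cycles, that common neighbor is unique. This is the step where $2$-optimality is really used, and I expect it to be the main point requiring care. Without it, the count of cycles would involve $d_k$ and the degrees on the right, and those are not controlled by the hypotheses.

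\textbf{Step 2 (random selection).} I will put each left vertex into a random set $S$ independently with probability $p=2cm_k/n_k$. Since $m_k^2/n_k\leq B$ and $m_k\to\infty$, we have $m_k/n_k\leq B/m_k\to 0$, so $p\leq 1$ for large $k$. A fixed $2j$-cycle survives, meaning all $j$ of its left vertices lie in $S$, with probability $p^j$. Hence the expected number $Y$ of surviving cycles of length at most $2s$ satisfies
\[
\EE[Y]\leq \sum_{j=3}^{s} (pm_k)^j\leq \sum_{j=3}^{s}(2cB)^j=:K,
\]
a constant independent of $k$. Meanwhile $\EE|S|=pn_k=2cm_k$.

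\textbf{Step 3 (alteration).} By linearity, $\EE[|S|-Y]\geq 2cm_k-K$, so some outcome achieves $|S|-Y\geq 2cm_k-K$. I will delete from $S$ one left vertex of each surviving short cycle and call the result $L'$. The induced subgraph $A_k$ then has no cycles of length at most $2s$, so its girth exceeds $2s$. Moreover $|L'|\geq 2cm_k-K\geq cm_k$ once $cm_k\geq K$, which holds for all $k\geq k^*$ since $m_k\to\infty$. Combined with $|R(A_k)|\leq m_k$ this gives (2), and Theorem~\ref{Thm: girth} gives (3).
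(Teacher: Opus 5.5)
Your proof is correct and follows essentially the same route as the paper. Both arguments bound the number of $2j$-cycles by cyclic sequences of right vertices, using the absence of $4$-cycles (the content of Lemma~\ref{lem: expectedcycles}), select about $2cm_k$ left vertices at random so that the expected number of surviving cycles of length at most $2s$ is $O(1)$, delete left vertices from those cycles, and conclude with Theorem~\ref{Thm: girth}.

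The differences are cosmetic. You use independent selection with the $\EE[|S|-Y]$ alteration, delete only one vertex per cycle, and get a non-asymptotic bound. The paper takes a uniform subset of fixed size $\lfloor 2cm_k\rfloor$, uses asymptotic binomial estimates, and removes all left vertices of each short cycle.
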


The previous Theorem not only shows existence of $s$-optimal small-set expanders but also that they are relatively abundant in $2$-optimal families.

\medskip

However, the graphs constructed above have a property which is often undesirable for applications: their minimum right-degree is vanishingly small (see Lemma~\ref{lem: rmin_selection} for a precise statement). To remediate this problem we study a non-linear selection regime by uniformly choosing a subset of $\lfloor cm_k^\beta \rfloor$ left-vertices and their right-neighbors (selection phase) and then successively removing all left-vertices of any cycle of length $\leq s-1$ (alterations phase). We carry out a detailed analysis of the behavior of the minimum right-degree under this process and show in Theorem~\ref{thm: rmin_main} that this quantity is \emph{of the same order as the average right-degree} $m_k^{\beta-1}$ whenever $1<\beta<1+\frac{1}{s-1}$ proving a novel concentration result for the right-degrees of such $s$-optimal expanders with a nonlinear ratio between the left and right parts, which are of interest in applications.

Returning to the general theory, it can be shown (see Lemma~\ref{lem: exps}) that the number of left vertices $n$ of any $s$-optimal expander of left-degree $d$ must grow exponentially (at least faster than $(d-1)^s$) and therefore $s$-optimality cannot be expected to hold for too large a value of $s$ for given $n$. 

As mentioned earlier, the expansion constants $\alpha_G(s)$ control the basic properties of the code $B(G)$ and $s$-optimality guarantees that they are large when $t\leq s$. With an eye towards applications it is natural to ask whether $s$-optimality implies lower bounds on the expansion constants $\alpha_G(h)$ for larger $h$. Our next Theorem answers this question affirmatively, allowing us to "transfer" lower bounds on the expansion constants $\alpha_G(s)$ for small $s$ to lower bounds for expansion constants $\alpha_G(h)$ for larger values of $h$. This result is especially useful because will be often possible to guarantee $s$-optimality for small $s$ and then use the Theorem to achieve lower bounds on the expansions constants for $h\geq s$ at ranges $h$ in which exhaustive exploration is simply unfeasible. The proof is based on ideas from linear optimization introduced in~\cite{CCLO} which we believe are of independent interest.

\begin{theorem}[Transfer bounds on expansion constants] \label{Thm: transfer} Let $G$ be a left-regular bipartite graph of degree $d$ and let $s,h$ be positive integers with $\max(s,h)\leq n$. If $h\geq s$ then the inequality
\[\alpha_G(h)\geq d-\frac{h-1}{s-1}\left(d-\alpha_G(s)\right)\]
holds. In particular, if $G$ is an $s$-optimal small-set expander of left degree $d$ then the inequality 
\[\alpha_G(h)\geq d-\frac{h-1}{s}\] holds for every $h\geq s$.
\end{theorem}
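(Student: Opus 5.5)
The plan is a double-counting (averaging) argument over $s$-subsets of a fixed $h$-subset, measured through the \emph{deficiency} of a set. For $Y\subseteq L$ put $e(Y):=d|Y|-|N(Y)|$. For a right vertex $r$ write $c_r(Y):=|N(r)\cap Y|$. Counting edges from $Y$ gives $d|Y|=\sum_r c_r(Y)$, hence
\[e(Y)=\sum_{r\in R}\bigl(c_r(Y)-1\bigr)_+ .\]
The hypothesis says that $e(Y)\leq sD$ for every $Y$ with $|Y|=s$, where $D:=d-\alpha_G(s)$. (We assume $s\geq 2$, since otherwise the bound is vacuous.) The claimed inequality, for a fixed $X\subseteq L$ with $|X|=h$, is equivalent to
\[e(X)\leq \frac{h(h-1)}{s-1}D .\]
If $|X|<h$ the same argument with $h$ replaced by $|X|$ gives an even stronger bound, so it suffices to treat $|X|=h$.

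Choose $Y$ uniformly at random among the $s$-subsets of $X$. The key step is the termwise inequality
\[\EE\bigl[(c_r(Y)-1)_+\bigr]\geq p\,(c_r(X)-1)_+,\qquad p:=\frac{s(s-1)}{h(h-1)},\]
where $p$ is exactly the probability that two fixed elements of $X$ both lie in $Y$.

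To prove it, I will order the $c=c_r(X)$ elements of $N(r)\cap X$ as $v_1,\dots,v_c$ and consider the path $v_1v_2\cdots v_c$. The chosen vertices $N(r)\cap Y$ induce a subgraph of this path, i.e.\ a disjoint union of subpaths on $k=c_r(Y)$ vertices, so they span at most $(k-1)_+$ path edges. Hence $(c_r(Y)-1)_+$ is at least the number of indices $i$ with $v_i,v_{i+1}\in Y$. Taking expectations, the right-hand side has expectation $(c-1)p$. This is the step I regard as the crux. A naive pair count over all $\binom{c}{2}$ pairs goes the wrong way, since $\binom{k}{2}\geq k-1$, so the choice of a spanning tree (here, a path) is what makes the inequality hold. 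The bound is tight when $c=2$.

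Summing over $r$ gives $\EE[e(Y)]\geq p\,e(X)$. Since $e(Y)\leq sD$ for every $s$-subset $Y$, we get
\[e(X)\leq \frac{sD}{p}=\frac{h(h-1)}{s-1}D .\]
Dividing by $h$ yields
\[\frac{|N(X)|}{|X|}\geq d-\frac{h-1}{s-1}\bigl(d-\alpha_G(s)\bigr),\]
and minimizing over $X$ gives the first claim. For the second claim, $s$-optimality gives $d-\alpha_G(s)\leq 1-\frac1s=\frac{s-1}{s}$. Substituting this yields $\alpha_G(h)\geq d-\frac{h-1}{s}$.
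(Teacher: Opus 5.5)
Your proof is correct, and it differs from the paper's in how the key inequality is obtained.

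The paper also averages $|N(Y)|$ over uniformly random $s$-subsets $Y$ of a fixed $h$-set $X$. However, it records the outcome only as the linear constraint $\sum_j p_j\beta_j\ge s\alpha_G(s)$ on the right-degree profile $(\beta_j)$ of $H_X$, where $p_j=1-\binom{h-j}{s}/\binom{h}{s}$. It then lower-bounds $|N(X)|=\sum_j\beta_j$ by LP duality. It exhibits the dual point $\eta_1^*=\frac{h(h-1)}{s(s-1)}$, $\eta_2^*=\frac{s-h}{s-1}$, at which the constraints for $j=1,2$ are tight. It verifies the constraints for $j\ge 3$ via the decrease of $p_{j+1}-p_j=\binom{h-j-1}{s-1}/\binom{h}{s}$.

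Since $\EE[(c_r(Y)-1)_+]=\frac{js}{h}-p_j$ whenever $c_r(X)=j$, your termwise inequality $\EE[(c_r(Y)-1)_+]\ge p\,(j-1)$, multiplied by $1/p=\eta_1^*$, is exactly the dual feasibility condition $\eta_1^*p_j+\eta_2^*j\le 1$. Its tightness at $c=1,2$ is the paper's pair of active constraints. So both arguments certify the same inequality.

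What each route buys:
\begin{itemize}
\item Your spanning-path argument proves it without binomial manipulation or appeal to duality. It also explains why the bound holds: the deficiency of each right vertex dominates the number of surviving edges of a spanning path of its neighborhood.
\item The paper's LP formulation provides a setting in which one can ask whether the certificate is optimal (its complementary-slackness remark). It is also a framework into which further linear constraints could be fed mechanically.
\end{itemize}

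One small repair: your reduction ``the same argument with $h$ replaced by $|X|$'' only covers $s\le|X|<h$. For $|X|<s$ there are no $s$-subsets. There the bound is immediate from $|N(X)|/|X|\ge\alpha_G(s)=d-D\ge d-\frac{h-1}{s-1}D$, using $D\ge 0$ and $h\ge s$. (The paper itself only treats sets of size exactly $h$.)
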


The graphs of Example~\ref{example: PS} show that the above inequality is sharp when $s=2$ and $d=q+1$ for any prime power $q$.

Finally, in Section~\ref{Sec: PQ} we discuss an extended application: the usage of $s$-optimal expander codes for constructing post-quantum key exchange protocols. We give a detailed introduction to the topic and show that $s$-optimality serves as a guarantee against several common cryptographic attacks on such protocols. The transfer bounds above are used to guarantee the computational feasibility of the exchange procedure in the ranges of interest. These results have convinced us that optimal small-set expanders could be of considerable practical interest.

\section{Optimal small set expanders and girth.}

\begin{proof}[Proof of Lemma~\ref{Lem: UpperBound}] Since $G$ is left-regular and has left-degree $d$, double-counting the total number of edges of $G$ gives the equality $dn = \sum_{r\in R} |N(r)|$. As a result the average right-degree $\overline{e}$ satisfies $dn/m =\overline{e}$ which, because $n\geq m$, implies that there exists a right vertex $r^*$ for which $|N(r^*)|\geq d$. Since $d\geq 2$ there must exist a right vertex with at least two distinct left vertices $v_1,v_2$. A a result $|N(\{v_1,v_2\})|\leq 2d-1$ and therefore $\alpha_G(2)\leq d-1/2$ as claimed.

Assume that $\alpha_G(2) = d-1/2$ holds. Given $t\leq d$ choose $t$ left vertices $v_1,\dots, v_t$ among the at least $d$ neighbors of $r^*$. Since $G$ is $2$-optimal every two left vertices have at most one common neighbor so $r^*$ is the only intersection point of any subset of the $v_i$ of cardinality at least two and we conclude that the number of right-neighbors of the set $X:=\{v_1,\dots, v_t\}$ is equal to $td-(t-1)$. Dividing by $|X|=t$ we conclude that $\alpha_G(t)\leq d-\frac{t-1}{t}$ as claimed.\end{proof}
\begin{remark} The proof shows that the previous inequality holds for all $t\leq e$ where $e$ is the largest degree of a right vertex of $G$.
\end{remark}

\begin{proof}[Proof of Theorem~\ref{Thm: girth}]
$(1)\implies (2)$ If $g(G)>2s$ then any nonempty set $X$ of $t\leq s$ left vertices defines an induced bipartite subgraph $H_X$ with vertices $X\sqcup N(X)$ and having $dt$ edges which connect $X$ with its neighbors in $G$. Since $H_X$ is bipartite, any simple cycle on $H_X$ would have at most $t$ vertices contraddicting our assumption on the girth of $G$. We conclude that $H_X$ must have no cycles and thus be a forest. As a result the difference between the number of vertices and edges of $H_X$ equals the number of connected components of $H_X$ and in particular it is strictly positive because $X$ is nonempty. Explicitly, we have shown that the following inequality holds
\[\left(|X|+|N(X)|\right)-d|X|\geq 1\]
or equivalently that 
\[\frac{|N(X)|}{|X|}\geq d-1+\frac{1}{t}\]
as claimed proving $(2)$. Conversely, we show $(2)\implies (1)$ by verifying that if $g(G)\leq 2s$ then $G$ is not $s$-optimal. If $g(C)\leq 2s$ then there exist left vertices $X=\{v_1,\dots, v_t\}$ and right vertices $r_1,\dots, r_t$ such that $v_1,r_1,v_2,r_2,\dots, v_t,r_t$ forms a simple cycle of length $2t$ for some $t\leq s$. Since each $r_i$ is double-counted exactly once when adding the neighborhoods of the $v_i$ we conclude that 
\[|N(X)|\leq dt-t=(d-1)t\]
so $\alpha_G(t)\leq d-1$ and therefore $G$ is not an $s$-optimal expander as we wanted to show.
\end{proof}

\begin{remark} If $G$ is $s$-optimal then there may be sets $X$ of size $t\leq s$ for which the inequality $\alpha_G(t)\geq d-1+\frac{1}{t}$ is strict. The Euler characteristic computation used in the proof above proves that if the set $X$ has size $t$ and the graph $H_X$ has $k$ connected components then the equality
\[\frac{|N(X)|}{|X|} = d-1+\frac{k}{t}\]
holds. This observation allows us to compute the expansion constants of $s$-optimal graphs for $t\leq s$. If $G$ is $s$-optimal and $0<t\leq s$ then 
\[\alpha_G(t)=d-1+\frac{k}{t}\]
where $k$ is the minimum number of connected components of $H_X$ as $X$ ranges over sets of size $t$.
\end{remark}

\section{Constructing \texorpdfstring{$s$-optimal}{s-optimal} expanders from \texorpdfstring{$2$-optimal}{2-optimal} families via random selection.}

We begin by introducing some preliminaries from combinatorics. For a positive integer $t$, let $D_t$ denote the dihedral group of symmetries of the regular polygon with $t$ vertices labelled $1,\dots, t$. If $X$ is any finite set then the group $D_t$ acts on sequences of $t$ distinct elements of $X$ via 
\[\sigma\cdot(x_1,\dots, x_t):=(x_{\sigma(1)},\dots x_{\sigma(t)}).\]
The \emph{round table orderings of $t$ distinct elements of $X$}, denoted ${\rm RTO}(X,t)$ is the set of orbits of this action. If we denote the orbit of $(x_1,\dots, x_t)$ by $[x_1,\dots, x_t]$ then $[x_1,x_2,\dots,x_{t-1}, x_t] = [x_2,x_3,\dots, x_k,x_1]$ and $[x_1,x_2,\dots,x_{t-1},x_t]=[x_1,x_t,x_{t-1},\dots, x_2,x_1]$. The class $[x_1,\dots, x_t]$ consists of all placements of $x_1,\dots, x_t$ in a round table in such a way that the neighbors of $x_i$ are $\{x_{i+1},x_{i-1}\}$ for every $i$, where the operations in the subindices are done modulo $t$.

It is immediate that
\[\left|{\rm RTO}(X,t)\right| = \frac{t!\binom{|X|}{t}}{2t}=\binom{|X|}{t}\frac{(t-1)!}{2} \]
because each orbit has the same size as the group since all $x_i$ are assumed distinct.

We will use round table orderings as a mechanism for cycle counting as follows. Formally, a simple cycle of length $t$ in any graph $H$ is a sequence $(v_1,\dots, v_t)$ of distinct adjacent vertices. However the cycle itself (i.e. the edges) defined by $(v_1,\dots, v_t)$ is the same as the cycle determined by $\sigma\cdot(v_1,\dots, v_t)$ for $\sigma\in D_t$ and therefore it is natural to count cycles using round table orderings. For an integer $t$ we define the simple cycles of length $t$ in $H$ as
\[C_t(H):=\left\{[v_1\dots v_t]\in {\rm RTO}(V(H),t): \{v_i,v_{i+1}\}\in E(H)\right\}\]
and the number of simple cycles of length $t$ is the cardinality $|C_t(H)|$ of this set.

\begin{lemma}\label{lem: expectedcycles} Let $G=(L\sqcup R ; E)$ be a $2$-optimal bipartite expander. Fix $c\in \RR$ with $c>1$ and $n\geq \lfloor cm\rfloor$. Let $H$ be the random subgraph of $G$ obtained by selecting a set $X$ of $\lfloor cm\rfloor$ vertices of $L$ uniformly at random and joining them with all their right neighbors in $G$. If $t\geq 2$ and $Z_{(t)}$ denotes the number of simple cycles of length $2t$ in $H$ then the following statements hold:
\begin{enumerate}
\item The inequality
\[\EE[Z_{(t)}]\leq \frac{(t-1)!}{2}\frac{\binom{m}{t}\binom{\lfloor cm\rfloor}{t}}{\binom{n}{t}}\]
holds, and
\item If both $m$ and $n$ are large when compared to $t$ then the upper bound in $(1)$ has the same asymptotic behavior as 
\[\frac{c^t}{2t}\left(\frac{m^2}{n}\right)^t\]
\end{enumerate}
\end{lemma}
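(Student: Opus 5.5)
Since $H$ is bipartite, a simple cycle of length $2t$ alternates between $t$ distinct left vertices $v_1,\dots,v_t\in X$ and $t$ distinct right vertices $r_1,\dots,r_t$ in the pattern $v_1,r_1,v_2,r_2,\dots,v_t,r_t$. The plan is to bound $Z_{(t)}$ by the number of pairs consisting of a $t$-subset $S\subseteq R$ and a round table ordering of $S$ that is "realized" in $G$ with left vertices in $X$. Then we take expectations using the uniform choice of $X$.

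\textbf{Step 1: counting cycles in $G$.} Fix a round table ordering $[r_1,\dots,r_t]\in{\rm RTO}(R,t)$; there are $\binom{m}{t}\frac{(t-1)!}{2}$ of these. By $2$-optimality (Theorem~\ref{Thm: girth} with $s=2$, girth $>4$), two distinct right vertices $r_i,r_{i+1}$ have at most one common left neighbor. Hence, once the cyclic order of the $r_i$ is fixed, the left vertex $v_i$ between $r_i$ and $r_{i+1}$ is uniquely determined if it exists. So each round table ordering of right vertices corresponds to at most one $2t$-cycle of $G$.

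Conversely, every $2t$-cycle of $G$ determines its right round table ordering, so this map is injective. (For $t=2$ the two orbits coincide, since the dihedral action identifies $[r_1,r_2]$ with $[r_2,r_1]$; the girth condition in fact kills all $4$-cycles, so the bound is trivially true there.) This gives at most $\binom{m}{t}\frac{(t-1)!}{2}$ cycles of length $2t$ in $G$, each with a well-defined set of $t$ distinct left vertices.

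\textbf{Step 2: expectation.} A $2t$-cycle of $G$ lies in $H$ precisely when its $t$ left vertices all lie in $X$, because $H$ keeps all right neighbors of vertices in $X$. For a fixed $t$-subset $T\subseteq L$ and uniform $X$ of size $N=\lfloor cm\rfloor$,
\[
\PP(T\subseteq X)=\frac{\binom{n-t}{N-t}}{\binom{n}{N}}=\frac{\binom{N}{t}}{\binom{n}{t}}.
\]
By linearity of expectation over the cycles counted in Step 1, we obtain (1).

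\textbf{Step 3: asymptotics.} For $t$ fixed and $m,n\to\infty$ we have $\binom{m}{t}\sim m^t/t!$, $\binom{\lfloor cm\rfloor}{t}\sim (cm)^t/t!$ and $\binom{n}{t}\sim n^t/t!$. Substituting, the bound becomes
\[
\frac{(t-1)!}{2}\cdot\frac{c^tm^{2t}}{t!\,n^t}=\frac{c^t}{2t}\left(\frac{m^2}{n}\right)^t,
\]
which is (2).

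The only delicate point is Step 1: the injectivity claim that the right-vertex cyclic order determines the cycle. This is exactly where $2$-optimality enters, and the bookkeeping of the dihedral identifications must match on both sides.
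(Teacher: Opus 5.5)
Your proposal is correct and takes essentially the same route as the paper's proof. Both inject $2t$-cycles of $G$ into round table orderings of right vertices, using the uniqueness of common neighbours that $2$-optimality provides. Both then apply linearity of expectation with $\PP(T\subseteq X)=\binom{\lfloor cm\rfloor}{t}/\binom{n}{t}$ and finish with the standard binomial asymptotics. Your separate treatment of $t=2$ covers a case the paper's proof passes over: there the orbit-count formula degenerates, but the bound holds trivially because $G$ has no $4$-cycles, so $Z_{(2)}=0$.
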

\begin{proof} If $[\ell_1,r_1,\dots, \ell_t,r_t]\in C_{2t}(G)$ with $\{\ell_1,\dots,\ell_t\}\subseteq L$ and $\{r_1,\dots, r_t\}\subseteq R$ then both sets have $t$ distinct elements and the class $[r_1,\dots, r_t]\in {\rm RTO}(R,t)$ is well defined. Since $G$ is $2$-optimal the vertex $\ell_i\in L$ is the unique vertex of $G$ simultaneously adjacent to $r_i$ and $r_{i+1}$ (with addition mod $t$) and therefore the class $[r_1,\dots, r_t]$ determines $[\ell_1,r_1,\dots, \ell_t,r_t]$ uniquely. We conclude that the inequality $\left|C_{2t}(G)\right|\leq \left|{\rm RTO}(R,t)\right|$ holds.
 
The random variable $Z_{(t)}$ is given by
\[Z_{(t)}=\sum_{[\ell_1,r_1,\dots, \ell_t,r_t]\in C_{2t}(G)}1_{\{\ell_1,\dots,\ell_t\}\in X} \]
and therefore 
\[\EE[Z_{(t)}]=|C_{2t}(G)|\PP\{\text{Any fixed $t$-subset of $L$ belongs to $X$}\}.\]
Since the selection of $X$ is done uniformly at random the probability term equals $\frac{\binom{\lfloor cm\rfloor}{t}}{\binom{n}{t}}$. By the inequality $\left|C_{2t}(G)\right|\leq \left|{\rm RTO}(R,t)\right|$ proved in the previous paragraph we conclude that
\[\EE[Z_{(t)}]\leq \left|{\rm RTO}(R,t)\right|\frac{\binom{\lfloor cm\rfloor}{t}}{\binom{n}{t}}\leq \frac{(t-1)!}{2}\frac{\binom{m}{t}\binom{\lfloor cm\rfloor}{t}}{\binom{n}{t}}\]
proving $(1)$. For the asymptotics we assume that both $n$ and $m$ are large and $t$ is fixed so we can approximate binomials via expressions of the form $\binom{m}{t}\sim \frac{m^t}{t!}$ yielding
\[\frac{(t-1)!}{2}\frac{\binom{m}{t}\binom{\lfloor cm\rfloor}{t}}{\binom{n}{t}}\sim \frac{(t-1)!}{2} \frac{\frac{m^t}{t!}\frac{(cm)^t}{t!}}{\frac{n^t}{t!}}=\frac{c^t}{2t}\left(\frac{m^2}{n}\right)^t\]
as claimed in $(2)$.
\end{proof}

We are now ready to prove the main result of this Section.

\begin{proof}[Proof of Theorem~\ref{Thm: s_optimal_existence}] Let $c'=2c$. Our assumptions on the sequences $m_k,n_k$ and the asymptotics from Lemma~\ref{lem: expectedcycles} part $(2)$ ensure that there exists an index $k_1$ such that $k\geq k_1$ implies that the following inequality holds
\[\sum_{t=3}^s\frac{(t-1)!}{2}\frac{\binom{m_k}{t}\binom{\lfloor c'm_k\rfloor}{t}}{\binom{n_k}{t}}\leq 2\sum_{t=3}^s \frac{(c')^t}{2t}\left(\frac{m_k^2}{n_k}\right)^t\]
and that the right-hand side is bounded by a constant $M$ for all such $k$. Furthermore, because $m_k\rightarrow \infty$ and $c>1$ there exists $k_2$ such that $k\geq k_2$ implies 
\[c'm_k-sM\geq cm_k.\] 

We claim that if $k^*=\max(k_1,k_2)$ then for every $k\geq k^*$ there exists an $s$-optimal expander $A_k\subseteq G_k$ satisfying the conclusions stated in the Theorem, and we will prove it by using the probabilistic method with alterations. 

If $H_k$ is the random subgraph of $G_k$ obtained by selecting a set $X_k$ of $\lfloor c'm_k\rfloor$ vertices of $L_k$ uniformly at random and joining them with all their neighbors in $G_k$ and $Y^{(k)}$ is the random variable counting the number of simple cycles of length $3\leq t\leq s$ in $H_k$ then, using the notation of Lemma~\ref{lem: expectedcycles} we have $Y^{(k)}:=\sum_{t=3}^s Z_{(t,k)}$ where $Z_{(t,k)}$ denotes the number of simple cycles of length $2t$ in $H_k$ and therefore 
\[\EE[Y^{(k)}]=\sum_{t=3}^s\frac{(t-1)!}{2}\frac{\binom{m_k}{t}\binom{\lfloor c'm_k\rfloor}{t}}{\binom{n_k}{t}}\leq M\]
so there must exist at least one realization of $H_k$ which we denote $B_k\subseteq G_k$ having at most $M$ such cycles. Since every cycle has at most $t\leq s$ left vertices, we can remove all left-vertices of $B_k$ involved in any such cycle and let $A_k$ be the subgraph of $B_k$ consisting of the remaining left vertices together with all their neighbors in $G_k$. 
Since $B_k$ has $c'm_k$ vertices the graph $A_k$ has at least $c'm_k-sM\geq cm_k$ vertices and each of them has left-degree $d_k$. The graph $A_k$ has no cycles of length $\leq 2s$, because we have removed them, and is therefore $s$-optimal as claimed.
\end{proof}

The previous proof yields us an algorithm to be used to generate such graphs, as is standard when applying the probabilistic method with alterations. More precisely,

\begin{algorithm}\label{alg: generator}
    INPUT: $G = (L \sqcup R, E)$ left $d$-regular, $2$-optimal bipartite expander; $s \in \NN, s \geq 3$; $N \in \NN$ with $N \leq |L|$. 
    
    OUTPUT: $G'=(L',R)$ left $d$-regular, $s$-optimal bipartite expander.
    \begin{enumerate}
        \item Generate $L'$ a random subset of $L$ of size $N$.
        \item $G':= (L' \sqcup R, E_{L' \rightarrow R})$ the induced subgraph.
        \item \textbf{While} $g(G') \leq 2s$ \textbf{do:}
        \item \quad $C:=$ shortest cycle of $G'$. Denote $\{\ell_1,\hdots,\ell_j\}$ all left nodes of $C$.
        \item \quad $L' := L' \backslash \{\ell_1,\hdots,\ell_j\}$.
        \item \quad $G' := (L' \sqcup R, E_{L' \rightarrow R})$
        \item Return $G'$
    \end{enumerate}
\end{algorithm}

Naturally, without control of the number of short cycles, our algorithm can completely destroy $G$. However, we have shown that with high probability - in the context of Theorem \ref{Thm: s_optimal_existence} - this algorithm will generate a desired graph provided $k$ is large enough. Also note that because we are always computing the shortest cycle, we always eliminate chordless cycles, so each iteration will only remove at most $sd$ edges, and the right degree of each node in the cycle will decrease by exactly $2$. This gives our algorithm stability properties that will be useful in Section \ref{Sec: RD}.

Our next result is about the limits of $s$-optimality. We show that any $s$-optimal small-set expander of degree $d$ must have a number of vertices which grows exponentially in $s$, more precisely

\begin{lemma}\label{lem: exps} If $G$ is an $s$-optimal expander of left-degree $d$ with $n\geq m$ then $n\geq \sum_{j=0}^{s+1}(d-1)^j$.
\end{lemma}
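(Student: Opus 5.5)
The plan is a Moore-bound argument. By Theorem~\ref{Thm: girth}, $s$-optimality is equivalent to $g(G)>2s$. Since $G$ is bipartite its girth is even, so $g(G)\geq 2s+2$. The key consequence is that for any vertex $z$ and any $k\leq s$, distinct non-backtracking walks of length $k$ starting at $z$ end at distinct vertices. Otherwise two such walks would close up into a cycle of length at most $2k\leq 2s$. Hence the ball of radius $s$ around $z$ induces a tree, and we only need to count the left vertices in a suitable ball.

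First I use $n\geq m$ exactly as in the proof of Lemma~\ref{Lem: UpperBound}. The average right degree is $dn/m\geq d$, so there is a right vertex $r^*$ with $|N(r^*)|\geq d$. If the right side were $d$-regular (or at least every right degree were $\geq d$), the count would be immediate. Growing the BFS tree from $r^*$, every vertex at depth $k\geq 1$ has at least $d-1$ children, whether it is a left vertex (exactly $d-1$) or a right vertex. Counting the left vertices at odd depths $1,3,\dots$ up to the radius permitted by the girth, and using that they are all distinct, gives a geometric sum of powers of $d-1$. I would then check that the radius allowed by $g\geq 2s+2$ gives exactly the range $j=0,\dots,s+1$ in the statement. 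To do this, I would treat the root's $\geq d=(d-1)+1$ children as supplying the $j=0$ term, and pair off each left layer with the right layer below it.

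The main obstacle is that the right degrees need not be $\geq d$. Some right vertices may have degree $1$, which would stop the tree from growing, so the naive BFS bound only works under a right-degree hypothesis. To handle this I would replace the vertex-by-vertex count with an averaged count of non-backtracking walks, in the spirit of the irregular Moore bound of Alon--Hoory--Linial (and Hoory's bipartite version). Concretely, I would start a uniformly random non-backtracking walk from a uniformly random left vertex and study the entropy of its position after $k$ steps. Left steps contribute $\log(d-1)$ exactly. For right steps, the stationary edge distribution and convexity show that the average contribution is at least $\log(\bar e-1)\geq\log(d-1)$, because $\bar e=dn/m\geq d$. Since the endpoints of walks of length $\leq s$ from a fixed start are distinct, the entropy is at most $\log n$, which yields $n\gtrsim (d-1)^{s}$ up to the lower-order terms of the sum.

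I expect the delicate step to be matching the exact sum $\sum_{j=0}^{s+1}(d-1)^j$ in the irregular case, and in particular getting the index $s+1$ right. If the averaged argument only gives a slightly weaker exponent, I would fall back on the cleaner BFS count from $r^*$. I would then check carefully whether the statement implicitly requires right degrees $\geq d$; the paper's later examples, such as Example~\ref{example: PS}, are right-regular. In that setting the tree count above gives the bound directly.
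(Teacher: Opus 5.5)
\textbf{There is a genuine gap, and it sits exactly at the step you flagged. It cannot be closed, because the statement as written is false.} Your route is essentially the paper's: use Theorem~\ref{Thm: girth} to get $g(G)\geq 2s+2$, then a Moore-type count, with the Alon--Hoory--Linial irregular bound to cope with small right degrees. But no argument can produce the term $(d-1)^{s+1}$. Take the Fano plane, which is Example~\ref{example: PS} with $k=2$, $q=2$. Here $d=3$, $n=m=7$ and the girth is $6>4$, so the graph is $2$-optimal, yet $\sum_{j=0}^{3}2^j=15>7$. For $d=2$ and any $s$, take the cycle of length $2s+2$ with its two colour classes as $L$ and $R$. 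It has $n=m=s+1$ and girth $2s+2$, so it is $s$-optimal, while the claimed bound is $s+2$. Both examples are right-regular, so your fallback claim that in the right-regular setting ``the tree count gives the bound directly'' also fails.

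\textbf{What girth $\geq 2s+2$ actually gives.} It guarantees that balls of radius $s$, or the two depth-$s$ trees hanging off the endpoints of an edge, are trees. If all right degrees are $\geq d$, the edge-centred count gives exactly $\sum_{j=0}^{s}(d-1)^j$ left vertices. Your count from $r^*$ gives this for odd $s$, but only $\sum_{j=0}^{s-1}(d-1)^j$ for even $s$; in that case you should centre at a left vertex instead. The extra term $(d-1)^{s+1}$ would need trees of depth $s+1$, i.e.\ girth $\geq 2s+4$.

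\textbf{The irregular case.} The paper's shortcut is the right one. Since $n\geq m$, the whole graph has average degree $\bar d=2dn/(n+m)\geq d$, and $2n\geq n+m$. But for girth $2(s+1)$ the Alon--Hoory--Linial bound reads $n+m\geq 2\sum_{j=0}^{s}(\bar d-1)^j$. The paper's displayed sum up to $s+1$ misquotes it, so the paper's own proof has the same off-by-one error. Hoory's bipartite bound, with left and right average degrees $d$ and $dn/m\geq d$, gives the same conclusion. Citing either one makes your entropy sketch unnecessary. As written, that sketch would also have to prune right vertices of degree one, where non-backtracking walks die and the convexity step breaks, and randomize the walk length to recover the full geometric sum.

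\textbf{Corrected statement.} What your plan proves, and what the examples above show is sharp, is $n\geq\sum_{j=0}^{s}(d-1)^j$.
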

\begin{proof} Double-counting edges we have $dn=m\overline{e}$ where $\overline{e}$ is the average right-degree and therefore the average degree $\overline{d}$ of our graph $G$ is at least $d$. By Theorem~\ref{Thm: girth} $G$ is $s$-optimal implies $g(C)\geq 2(s+1)$
Alon, Horray and Linial prove in~\cite{HooryLinialWigderson06}[Section 2] that if $G=(L\sqcup R;E)$ is a bipartite graph with average degree $\overline{d}$ and girth $g(G)=2t$ then the inequality 
\[|L|+|R|\geq 2\sum_{j=0}^{s+1}(\overline{d}-1)^j\]
from which the claimed inequality 
\[2n \geq 2\sum_{j=0}^{s+1}(d-1)^j\]
follows.
\end{proof}

\section{On the behavior of right-degrees}\label{Sec: RD}
A right-vertex $r\in R(G)$ with $e$ neighbors determines a \emph{dual word}  of weight $e$, that is an equation satisfied by all words of $B(G)$, namely $\sum _{j\in N(r)} x_j=0$. The existence of dual words of small weight is a natural source of cryptographic attacks on $B(G)$ and is therefore natural to ask for lower bounds for the minimum right-degree of any expander. In this Section we focus on deriving such bounds for expanders built via random selection and alteration from $2$-optimal families.

Our first result generalizes Theorem~\ref{Thm: s_optimal_existence}. It constructs $s$-optimal expanders by a random selection of left vertices which scales like $cm_k^{\beta}$ for any value of a new parameter $\beta$ which satisfies the inequality $1\leq \beta <1 +\frac{1}{s-1}$ and then modifies the resulting subgraph to remove short cycles.  Although the result looks similar to Theorem~\ref{Thm: s_optimal_existence}, we will see that the behavior of the minimum right-degree of the resulting $s$-optimal expanders $A_k$ is radically different when $\beta=1$ (linear) and $\beta>1$ (non-linear) case. More precisely it will turn out that  as $k\rightarrow \infty$, the minimum right-degree goes to zero when $\beta=1$ while it is bounded below by $\frac{dcm_k^{\beta-1}}{2}$ with probability going to one.

\begin{theorem}\label{Thm: existance_nonlinear} Let $(G_k)_{k\in\mathbb{N}}$ be a family of $2$-optimal expander graphs with
$G_k = (L_k \sqcup R_k; E_k)$ having left degree $d_k$. Let $n_k := |L_k|$ and $m_k := |R_k|$. Assume that the sequence $\frac{m_k^2}{n_k}$ is bounded and that $m_k \to \infty$. For all $t \in \mathbb{N}$, $t \ge 3$, all $c \in \mathbb{R}$ with $c > 1$, and all
$\beta \in \RR$ with $1 \leq \beta < 1 + \frac{1}{s-1}$, there exists an index
$k^\ast \in \mathbb{N}$ such that, for every $k \ge k^\ast$, the graph $G_k$
contains a bipartite subgraph $A_k$ with parts
$L(A_k) \sqcup R(A_k)$ satisfying:
\begin{enumerate}
  \item $A_k$ is left-regular of degree $d_k$;
  \item $|L(A_k)| \ge c m_k^{\beta} \ge c |R(A_k)|^{\beta}$;
  \item $A_k$ is an $s$-optimal small-set expander.
\end{enumerate}
\end{theorem}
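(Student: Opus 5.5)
The plan is to repeat the probabilistic-method-with-alterations argument from the proof of Theorem~\ref{Thm: s_optimal_existence}, with the selection size $\lfloor c'm_k\rfloor$ replaced by $\lfloor c'm_k^{\beta}\rfloor$, where $c'=2c$. I read the parameter $t\geq 3$ in the statement as $s\geq 3$, consistent with the bound on $\beta$.

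\textbf{Step 1: the selection is well defined.} Since $m_k^2/n_k\leq K$ for some constant $K$, we have $n_k\geq m_k^2/K$. Since $s\geq 3$, we also have $\beta<1+\frac{1}{s-1}\leq 3/2<2$. Hence $n_k\geq c'm_k^{\beta}$ for all large $k$, so we may choose a uniformly random set $X_k\subseteq L_k$ of size $\lfloor c'm_k^{\beta}\rfloor$. Let $H_k$ be the subgraph spanned by $X_k$ and all its neighbors in $G_k$.

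\textbf{Step 2: expected number of short cycles.} Lemma~\ref{lem: expectedcycles} does not use the specific size $\lfloor cm\rfloor$ except through the inclusion probability, so its proof applies verbatim to any selection size $N$. It gives
\[\EE[Z_{(t,k)}]\leq \frac{(t-1)!}{2}\frac{\binom{m_k}{t}\binom{N}{t}}{\binom{n_k}{t}},\qquad N=\lfloor c'm_k^{\beta}\rfloor .\]
For fixed $t$ and large $k$ this is asymptotic to
\[\frac{(c')^t}{2t}\left(\frac{m_k^{1+\beta}}{n_k}\right)^t=\frac{(c')^t}{2t}\left(\frac{m_k^2}{n_k}\right)^t m_k^{(\beta-1)t}\leq C_t\, m_k^{(\beta-1)t}.\]
Cycles of length $4$ do not occur, because $G_k$ is $2$-optimal (Theorem~\ref{Thm: girth}). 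So the number $Y^{(k)}$ of cycles of length $2t$ with $3\leq t\leq s$ satisfies $\EE[Y^{(k)}]\leq M\,m_k^{(\beta-1)s}$ for a constant $M$ and all large $k$. Here we use $\beta\geq 1$, so that the $t=s$ term dominates.

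\textbf{Step 3: alteration.} Fix a realization $B_k$ with $Y^{(k)}\leq M m_k^{(\beta-1)s}$. Delete every left vertex lying on one of these cycles; there are at most $s$ per cycle. Let $A_k$ consist of the surviving left vertices together with all their $G_k$-neighbors. Then $A_k$ is left-regular of degree $d_k$, since every left vertex keeps all of its edges. It has girth $>2s$, so it is $s$-optimal by Theorem~\ref{Thm: girth}. Its number of left vertices is at least
\[\lfloor c'm_k^{\beta}\rfloor - sM\,m_k^{(\beta-1)s}.\]
Finally, $|R(A_k)|\leq m_k$ gives $cm_k^{\beta}\geq c|R(A_k)|^{\beta}$.

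\textbf{Main obstacle: the exponent comparison.} Everything hinges on showing
\[c'm_k^{\beta}-sM\,m_k^{(\beta-1)s}\geq cm_k^{\beta}\quad\text{for large }k.\]
Since $c'=2c$, this reduces to $sM\,m_k^{(\beta-1)s}\leq c\,m_k^{\beta}$, that is, to the error exponent being strictly smaller than the main exponent: $(\beta-1)s<\beta$. This is equivalent to $\beta(s-1)<s$, i.e.\ $\beta<1+\frac{1}{s-1}$, which is exactly the hypothesis. Because the inequality is strict and $m_k\to\infty$, the ratio $m_k^{(\beta-1)s-\beta}\to 0$, and we obtain the required index $k^*$. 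At $\beta=1$ the error term is $O(1)$ and the argument reduces to the proof of Theorem~\ref{Thm: s_optimal_existence}.

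The only other care needed is the uniformity of the binomial asymptotics over the finitely many $t\leq s$. This is handled as before, by absorbing a factor of $2$ into $M$ for $k\geq k_1$.
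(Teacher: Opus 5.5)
Your proposal is correct and follows the paper's proof: you select $\lfloor 2c\,m_k^{\beta}\rfloor$ left vertices, bound the expected number of cycles of length $2t$ ($3\le t\le s$) by $O(m_k^{(\beta-1)t})$ via the argument of Lemma~\ref{lem: expectedcycles}, and delete their left vertices. Your exponent bookkeeping is cleaner than the paper's displayed chain, which writes the exponents as $(t-1)(\beta-1)$ and only asserts $O(m_k^{\beta})$. You use $\EE[Y^{(k)}]=O(m_k^{s(\beta-1)})$ together with $s(\beta-1)<\beta \iff \beta<1+\frac{1}{s-1}$, which gives the needed $o(m_k^{\beta})$. You also correctly use only boundedness of $m_k^2/n_k$, rather than $n_k\sim m_k^2$.
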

\begin{proof} If $G$ is any $2$-optimal graph and $H$ be a random subgraph of $G$ obtained by selecting a set of vertices
$X \subseteq L$ with $|X| = \lfloor c m^{\beta} \rfloor$ and joining them with all their right neighbors in $G$.
If $t \geq 2$ and $Z_{(t)}$ denotes the number of cycles of length $2t$ in $H$, then arguing as in Lemma~\ref{lem: expectedcycles} we have
\[
\EE[Z_{(t)}] \leq \frac{(t-1)!}{2}
\frac{\binom{m}{t}\binom{\lfloor c m^{\beta} \rfloor}{t}}{\binom{n}{t}}= O\bigl(m^{(\beta-1)t}\bigr)
\]
Where the last equality holds if $n \sim m^2$.
Now assume the $(G_k)_{k\in \NN}$ are a $2$-optimal family as above. Pick $c' = 2c$. Following the same steps of the proof of Theorem~\ref{Thm: s_optimal_existence}, it suffices to show that there exists
$k^{*} \in \NN$ such that if $k \geq k^{*}$, then
\[
c' m_k^{\beta} - s \EE[Y^{(k)}] > c m_k^{\beta}.
\]
To prove this recall that
\[
Y^{(k)} = \sum_{t=3}^{s} Z_{(t,k)}
\]
is the total number of cycles of length at most $2s$ in the random subgraph $H_k$ of $G_k$ obtained by selecting
a set $X_k = \lfloor c' m_k^{\beta} \rfloor$ vertices from $L_k$ and joining them with their neighbors in $R_k$. 

The condition $\beta < 1 + \dfrac{1}{s-1}$ is equivalent to $(s-1)(\beta - 1) < 1$.
Therefore,
\[
\EE[Y^{(k)}]
= \sum_{t = 3}^{s} O\!\left(m_k^{(t-1)(\beta-1)}\right)
= O\!\left(m_k^{(s-1)(\beta-1)}\right)
= O\!\left(m_k^{\beta}\right).
\]
Hence, there exists $k^{*}$ large enough such that for all $k \geq k^{*}$,
\[
(c' - c)m_k^{\beta} \geq s \EE[Y^{(k)}],
\]
as claimed.\end{proof}

\begin{remark} 
We observe that this exponent ($\beta = 1+\frac{1}{s-1}$) is in fact optimal. Following the arguments for Moore bounds in irregular graphs (see ~\cite{AHL}*{pags. 54-57},  or laid out explicitly in  ~\cite{Hoory}), it follows that for any family $G'_k = L'_k \sqcup R_k'$ of $s$-optimal, left $d$-regular bipartite graphs we have that $|L'_k| \lesssim |R'_k|^{1+\frac{1}{s-1}}$. Indeed, if $d,d_R$ are the average left and right degrees respectively, the leading-order growth in the Moore bound argument yields

\[|R'_k| \gtrsim d^{s}d_R^{s-1} = d^{s}\cdot \dfrac{d^{s-1}|L_k'|^{s-1}}{|R'_k|^{s-1}},\]

and the argument concludes by isolating $|L_k'|$. This is relevant because in general we want the left-to-right ratio to be large in the context of coding theory, since for instance it will maximize the average right degree (see section \ref{Sec: PQ}). It is also worth noting that the same exponent can be reached by following the cycle removal argument on random left $d$-regular graphs, although naturally we have less control over the structural properties of these expanders.
\end{remark}

To capture the behavior of the minimum right-degree we separate our analysis into two parts. In the first Lemma we focus on the behavior of this quantity under random selections and on the key dichotomy between the linear $\beta=1$ and nonlinear $\beta>1$ cases.

For each $r\in R_k$ let $d_0(r)$ denote the right-degree of the vertex $r$ in the random subgraph obtained by uniformly choosing a subset of left vertices and all their neighbors and let $d_0^*:=\min_{r\in R} d_0(r)$. 

First we study the minimum right degree during the left node selection (ignoring, for now, the alterations phase) from a $2$-optimal family which is regular on both sides,

\begin{lemma}\label{lem: rmin_selection}
Let $(G_k)_{k \in \NN}$, $G_k = (L_k \sqcup R_k,E_k)$ be a family of $2$-optimal, $d$-left regular and $\delta$-right regular graphs such that $|L_k| = n_k \sim m_k^2 = |R_k|^2$. For each $k$, let $X_k \subseteq L_k$ be a uniformly chosen random subset of size $\left\lfloor c\cdot m_k^{\beta}\right \rfloor<n_k$ and let $H_k$ be the induced bipartite subgraph with vertices $X_k\sqcup N(X_k)$. The following statements hold,
\begin{enumerate}
    \item If $\beta>1$, then for all $a < c \cdot d$ we have 
    \[\lim_{k\rightarrow \infty}\Pr\left(d_0^* \geq a \cdot  m_k^{\beta-1}\right) = 1\]

    \item If $\beta = 1$ then the random variable
    $d_0^*$ converges to zero in probability as $k\rightarrow\infty$. 
\end{enumerate}
\end{lemma}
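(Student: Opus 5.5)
Fix $k$ and a right vertex $r\in R_k$. Since $G_k$ is $\delta$-right regular, $r$ has exactly $\delta$ neighbors in $L_k$. The degree $d_0(r)$ is the number of these that land in the uniformly chosen $X_k$. So $d_0(r)$ is hypergeometric with population $n_k$, $\delta$ marked elements and sample size $N_k:=\lfloor c m_k^\beta\rfloor$. Double counting edges gives $d n_k=\delta m_k$, so $\delta = d n_k/m_k\sim d m_k$. Hence
\[\EE[d_0(r)]=\frac{\delta N_k}{n_k}=\frac{dN_k}{m_k}\sim c\,d\, m_k^{\beta-1}.\]
The whole argument is then a tail estimate for one hypergeometric variable, followed by a union bound over the $m_k$ right vertices (part (1)) or a second-moment argument (part (2)). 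The $2$-optimality of $G_k$ plays no role here beyond fixing the degrees.

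For part (1), let $\mu_k=\EE[d_0(r)]$ and fix $a<cd$, so that $a m_k^{\beta-1}\leq (1-\epsilon)\mu_k$ for some $\epsilon>0$ and all large $k$. I would use the hypergeometric lower-tail Chernoff bound, $\Pr(d_0(r)\leq (1-\epsilon)\mu_k)\leq \exp(-\epsilon^2\mu_k/2)$. This bound is valid because the hypergeometric distribution is dominated in convex order by the binomial (Hoeffding), so binomial Chernoff bounds carry over. A union bound over $R_k$ then gives
\[\Pr\left(d_0^*< a m_k^{\beta-1}\right)\leq m_k\exp\left(-\tfrac{\epsilon^2}{2}c\,d\,m_k^{\beta-1}(1+o(1))\right).\]
Because $\beta>1$, the exponent is a positive power of $m_k$, which beats $\log m_k$, so the bound tends to $0$. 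This is the easy half.

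For part (2), $\mu_k\to cd$ is bounded. I would show that with probability tending to one some right vertex has $d_0(r)=0$; since $d_0^*$ is a nonnegative integer, that is exactly convergence to zero in probability. Let $W_k=\#\{r:d_0(r)=0\}$. First, $\Pr(d_0(r)=0)=\binom{n_k-\delta}{N_k}/\binom{n_k}{N_k}\to e^{-cd}$ because $\delta N_k/n_k\to cd$ and $\delta N_k^2/n_k^2\to0$, so $\EE[W_k]\sim m_k e^{-cd}\to\infty$. Then I apply Chebyshev to $W_k$. For $r\neq r'$, $2$-optimality says they share at most one left neighbor, so $|N(r)\cup N(r')|\geq 2\delta-1$. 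This gives
\[\Pr(d_0(r)=d_0(r')=0)=\frac{\binom{n_k-|N(r)\cup N(r')|}{N_k}}{\binom{n_k}{N_k}}=e^{-2cd}(1+o(1))\]
uniformly in the pair. Hence $\mathrm{Var}(W_k)=o(\EE[W_k]^2)$ and $\Pr(W_k=0)\to0$.

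The main obstacle is this uniformity in part (2): I need the $(1+o(1))$ factors in the pair probabilities to be controlled uniformly over all $\sim m_k^2$ pairs, so that the covariance sum is genuinely $o(m_k^2)$. I would get this from the explicit ratio
\[\prod_{i<N_k}\frac{n_k-u-i}{n_k-i}, \qquad u\in\{2\delta-1,2\delta\},\]
using $\log(1-x)=-x+O(x^2)$ and the facts $u N_k/n_k\to 2cd$ and $uN_k^2/n_k^2\to0$.
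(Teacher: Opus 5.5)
Your proof is correct and follows the paper's argument: a hypergeometric Chernoff bound plus a union bound over $R_k$ for part (1), and for part (2) a Chebyshev second-moment argument on the number of isolated right vertices, using $2$-optimality to get $|N(r)\cup N(r')|\geq 2\delta-1$. The only difference is technical. You evaluate $\binom{n_k-u}{N_k}/\binom{n_k}{N_k}$ directly from the product formula, whereas the paper goes through Ehm's hypergeometric--binomial total variation bound; your route also avoids the paper's separate treatment of the case $cd<1$. The uniformity over pairs that you flag as the main obstacle is automatic, since for each $k$ the quantity $u$ takes only the two values $2\delta-1$ and $2\delta$.
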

\begin{proof} Let $r\in R_k$ be any right vertex. Since the set $X_k$ is selected uniformly at random, the random variable $d_0(r)$ will have value $j$ when exactly $j$ of the $\delta$ neighbors of $r$ belong to the random subset $X_k$. Because the set is selected unifornly at random this random variable has hypergeometric distribution with population size $N=n_k$, having $M=\lfloor cm_k^\beta\rfloor$ successes and $D=\delta$ trials. The mean of this distribution is the quantity $D(M/N) = \delta \frac{cm_k^{\beta}}{n_k}=c\cdot d (m_k^{\beta-1})=:\mu_k$
To prove $(1)$ recall that the Chernoff bound states that for any positive $\eta>0$ the hypergeometric distribution satisfies  
\[\PP\left\{d_0(r)\leq (1-\eta)\mu_k\right\}\leq \exp\left(-\frac{\eta^2}{2}\mu_k\right)\]
From this and the union bound, it follows that if $\beta>1$ then for any $\eta>0$ we have
\[
\mathbb{P}\!\left\{d_0^*\le (1-\eta)\mu_k\right\}
\le \sum_{r\in R_k}\mathbb{P}\!\left\{d_0(r)\le (1-\eta)\mu_k\right\}
\le m_k \exp\!\left(-\frac{\eta^2}{2}\mu_k\right)
\xrightarrow{k\to\infty} 0.
\]
proving the claim by an appropriate selection of $\eta$.

To show $(2)$ observe that if $dc<1$, then there must exist some isolated $r \in R_k$ following the selection, because $dc$ is precisely the average right degree, so the result holds trivially (in fact, the minimum degree is $0$ always). So we assume $dc \geq 1$.

Let $I_r := \mathbf{1}_{\{d_0(r)=0\}}$ be the random variable which indicates that $r$ is an isolated vertex and let $\chi_k = \sum_{r \in R_k} I_r$. If $\beta=1$ then $\mu_k$ is asymptotically constant to $c\cdot d$ and the random variables $d_0(r)$ have the same distribution for every $r\in R_k$ and in particular have a positive probability of being identically zero. This probability can be easily estimated from the poisson approximation to the hypergeometric distribution to be $e^{-c\dot d}$ and it follows that $\EE[\chi_k]\sim m_ke^{-cd}$ so the expected value of $\chi_k$ diverges as $k\rightarrow \infty$. However, this is not enough to conclude that $d_0^*=0$ in probability because of correlations (for a counterexample imagine the case in which all the random variables $d_0(r)$ not only have the same distribution but are copies of the same random variable). 
We claim that we can bound the covariance terms well enough so as to ensure that the ratio $\frac{\mathrm{Var}(\chi_k)}{\EE[\chi_k]^2}\rightarrow 0$ as $k\rightarrow \infty$. This is useful because by Chebyshev's inequality,
\[
\Pr\!\left(
\left|\chi_k - \EE[\chi_k]\right|
\ge \frac12 \EE[\chi_k]
\right)
\le
\frac{4\,\mathrm{Var}(\chi_k)}
     {\EE[\chi_k]^2}
\to 0.
\]
Hence with probability tending to $1$,
$\chi_k \ge \frac12 \EE[\chi_k]
\sim \frac12 e^{-cd} m_k$ and in particular,
$P(\chi_k \ge 1) \to 1$ as claimed. 

We now bound $\mathrm{Var}(\chi_k)$. We have
\[
\mathrm{Var}(\chi_k)
=
\sum_{r\in R_k} \mathrm{Var}(I_r)
+
\sum_{\substack{r,r'\in R_k\\ r\neq r'}} 
\mathrm{Cov}(I_r,I_{r'}).
\]

\medskip

Since $I_r$ is Bernoulli with parameter $p_0^{(k)}$ we have
\[
\sum_{r\in R_k} \mathrm{Var}(I_r)
\le
m_k p_0^{(k)}
=
\EE[\chi_k].
\]
For the covariance, fix two distinct right vertices $r \neq r'$.
The event $I_r=1$ means that none of the left neighbors of $r$
are selected in $X_k$.
Let $N(r)$ denote the set of left neighbors of $r$.
Because $G_k$ is $2$-optimal, we have $|N(r) \cap N(r')| \le 1$, and hence
\[
|N(r)\cup N(r')|
=
2\delta - y,
\qquad y \in \{0,1\}.
\]

\medskip

We will show that $\text{Cov}(I_r,I_{r'})=O(1/m_k)$. We will assume that $y=1$, and the other case works exactly the same.

\medskip

It is known that the total variation distance between a hypergeometric $\mathcal{H}(M,D,N)$ and a binomial distribution $Bi(M,D/N)$ is $O(M/N)$ (see \cite{Ehm}*{Theorem 1, Lemma 2}) provided $M \cdot (D/N) (1-D/N) \geq 1$. Here $M = cm_k$, $D = dn_k/m_k$ and $N = n_k$, so the condition holds for large enough $k$ since $dc \geq 1$. It follows that

\[\Pr(I_r = 1) = \left(1-\dfrac{\delta}{n_k}\right)^{cm_k} +O(cm_k/n_k) = \left(1-\dfrac{\delta}{n_k}\right)^{cm_k} +O(1/m_k)\]

Similarly,

\[\Pr(I_r = 1,I_{r'}=1) = \left(1-\dfrac{2\delta-1}{n_k}\right)^{cm_k} +O(1/m_k)\]

\medskip

Now we use that $(1+a/x)^x = e^a+O(1/x)$. Because  $c\cdot m_k\delta/n_k = cd$, and also $n_k/\delta = \frac{1}{d} \cdot m_k$ we have,

\[\Pr(I_r = 1) = \left(1-\dfrac{1}{n_k/\delta}\right)^{(n_k/\delta)\cdot  cm_k \cdot \delta /n_k} +O(1/m_k)= (e^{-1}+O(d/m_k))^{cm_k\delta/n_k}+O(1/m_k) = e^{-dc}+O(1/m_k) \]
where the last equality holds since $dc > 1$. Similarly,
\[\Pr(I_r = 1,I_{r'}=1) = e^{-2cd}+O(1/m_k)\]
We have thus shown that
\[\text{Cov}(I_r,I_{r'}) = \Pr(I_r = 1,I_{r'}=1) - \Pr(I_r = 1)^2 = e^{-2cd}+O(1/m_k)-(e^{-cd}+O(1/m_k))^2 = O(1/m_k)\]
and therefore,
\[
\sum_{r\neq r'} \mathrm{Cov}(I_r,I_{r'})
=
O\!\left(m_k^2 \cdot \frac{1}{m_k}\right)
=
O(m_k).
\]
Since $\EE[\chi_k] \sim m_k$, combining all of the above we have
\[
\frac{\mathrm{Var}(\chi_k)}
     {\EE[\chi_k]^2}
=
O\!\left(\frac{1}{m_k}\right)
\to 0.
\]
\end{proof}

Finally, for the nonlinear case $1<\beta<1+\frac{1}{s-1}$ we show that the alterations do not affect our conclusion too substantially. Recall that the removal of each cycle decreases the degree of the right vertices of the cycle by two and is therefore possible that if a right-vertex $r$ belongs to many such cycles, its degree decreases significantly after cycle removal dragging down the minimum degree. The key point however is that the removal of a cycle implies the removal of \emph{all} its left vertices and thus the alterations phase of the algorithm will only effectively remove a left-vertex-disjoint subset of all the cycles through $r$. The following Lemma shows that the probability of removing a fixed fraction of the expected degree from any right vertex vanishes asymptotically,

\begin{lemma} \label{lem: rmin_alterations} Let $(G_k)_{k\in\mathbb N}$ be a family of $d$-left regular $2$-optimal bipartite graphs such that $n_k \sim m_k^2$. Let $X_k \subseteq L_k$ be chosen uniformly at random of size 
$\lfloor c m_k^\beta\rfloor$ where $c>0$ and $\beta>1$.
If $1<\beta<1+\frac{1}{s-1}$ then for any $\eta>0$ the probability that any execution of the alterations phase of Algorithm \ref{alg: generator} removes at least $2\eta m_k^{\beta-1} $ edges from any right vertex $r$ vanishes as $k\rightarrow \infty$. 
\end{lemma}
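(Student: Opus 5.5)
The plan is to bound, for each fixed right vertex $r$, the number of edges at $r$ that the alterations phase can remove, and then take a union bound over the $m_k$ right vertices. The edges at $r$ that get deleted are exactly the edges to left vertices $\ell\in N(r)\cap X_k$ that are removed. Each removal step deletes all left vertices of a chordless cycle of length $2t\le 2s$. Such a cycle removes an edge at $r$ only if it meets $N(r)$, and then it removes at most two edges at $r$ (exactly two if $r$ lies on the cycle). Since removed cycles are left-vertex-disjoint, the number of edges removed at $r$ is at most $2$ times the size of a left-vertex-disjoint family of short cycles in $H_k$ each meeting $N(r)$, so at most $2W_r$, where $W_r$ is the maximum number of left-vertex-disjoint cycles of length $\le 2s$ in $H_k$ through a left neighbor of $r$. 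Hence it suffices to show $\PP\{\exists r:\ W_r\ge \eta m_k^{\beta-1}\}\to 0$, and this controls every execution of the algorithm simultaneously, because $W_r$ does not depend on the order of removals.

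To bound $W_r$ I will use a disjoint-occurrence (BK-type) moment argument adapted to uniform sampling without replacement. If $W_r\ge j$, there are $j$ left-disjoint cycles $C_1,\dots,C_j$ in $G_k$, each through some neighbor of $r$, all of whose left vertices lie in $X_k$. So $\PP\{W_r\ge j\}\le \frac{1}{j!}\sum \PP\{\bigcup_i L(C_i)\subseteq X_k\}$, where the sum runs over ordered $j$-tuples of disjoint cycles. The probability that a fixed set of $u$ left vertices lies in $X_k$ is at most $(|X_k|/n_k)^u$, since the sampling is negatively correlated. This gives $\PP\{W_r\ge j\}\le \mu_r^j/j!$, where $\mu_r$ is the expected number of cycles of length $\le 2s$ through $N(r)$ with all left vertices selected.

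I will compute $\mu_r$ as in Lemma~\ref{lem: expectedcycles} using $2$-optimality. A $2t$-cycle through a fixed left vertex $\ell$ is determined by the sequence of its $t$ right vertices, two of which are neighbors of $\ell$. This gives at most $d^2 m_k^{t-2}$ such cycles. With $\delta\sim dn_k/m_k$ neighbors of $r$, and each cycle having $t$ left vertices in $X_k$, we get
\[
\mu_r\lesssim \sum_{t=3}^{s}\frac{dn_k}{m_k}\, d^2 m_k^{t-2}\Bigl(\frac{cm_k^{\beta}}{n_k}\Bigr)^{t}
\asymp \sum_{t=3}^s m_k^{(\beta-1)t-1},
\]
using $n_k\sim m_k^2$ and $m_k^{\beta}/n_k\sim m_k^{\beta-2}$. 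So $\mu_r=O(m_k^{(\beta-1)s-1})$, and the hypothesis $(s-1)(\beta-1)<1$ gives $(\beta-1)s-1<\beta-1$. Hence $\mu_r=o(m_k^{\beta-1})$. With $j=\lceil \eta m_k^{\beta-1}\rceil$ we get $\mu_r/j\to 0$, so $\mu_r^j/j!\le (e\mu_r/j)^j$ decays like $\exp(-\Omega(m_k^{\beta-1}\log m_k))$. Multiplying by $m_k$ for the union bound still gives a quantity tending to $0$, because $m_k^{\beta-1}\log m_k\gg \log m_k$ when $\beta>1$.

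The main obstacle is the disjoint-occurrence step. Sampling without replacement means I cannot cite the BK inequality directly. I would instead justify $\PP\{U\subseteq X_k\}\le (|X_k|/n_k)^{|U|}$ by an explicit product of ratios $\frac{|X_k|-i}{n_k-i}\le\frac{|X_k|}{n_k}$, and check that the sum over disjoint tuples factors into at most $\mu_r^j$. A secondary point is the cycle count, where some cycles are counted once for each neighbor of $r$ they contain; that overcounting only helps the upper bound.
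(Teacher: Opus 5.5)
Your proof is correct and follows the paper's route. Both arguments take a union bound over collections of $j$ left-vertex-disjoint cycles of length at most $2s$ whose left vertices all lie in $X_k$, giving a bound of the form $\mu^j/j!$, then apply $1/j!\le (e/j)^j$, the exponent condition $(s-1)(\beta-1)<1$, and a union bound over right vertices.

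Your version is more careful than the paper's write-up in two places:
\begin{enumerate}
\item You count cycles through any left neighbour of $r$, since these also delete edges at $r$, whereas the paper counts only cycles through $r$ itself. This costs nothing: $2$-optimality gives $\deg(r)\le (m_k-1)/(d-1)$ even without the right-regularity your $\delta\sim dn_k/m_k$ presumes, so the count stays $O(m_k^{t-1})$.
\item You use the exact bound $\PP\{U\subseteq X_k\}\le (|X_k|/n_k)^{|U|}$ instead of asymptotic binomial approximations.
\end{enumerate}
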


\begin{proof} For a vertex $r\in R_k$ and an integer $j$ let $N^j(r)$ be the event that $X_k\supseteq A$ for some set $A$ which is a union of the left-vertices of a collection of $j$ left-vertex disjoint cycles with lengths in the interval $[6,2s]$. 

We study the sets $N^j(r)$ because every execution of the algorithm in which the degree of $r$ drops by at least $2j$ in the alterations phase requires the removal of a set $A$ of $j$ left-vertex disjoint cycles with $A\subseteq X_k$. This is because the algorithm removes a cycle by removing its edges and \emph{all} its left-vertices, so it will only remove other pairs edges coming out of $r$ if their corresponding cycles are left-vertex disjoint. It follows that $\PP(N^j(r))$ is an upper bound on the probability that our algorithm decreases the right-degree of $r$ by $2j$ or more. 

We will derive an upper bound on its probability via a union bound over the possible $A$'s. More precisely if $b_i$ denotes the number of cycles of size $2i$ in such a collection then the set $A$ has size $a=\sum_{i=3}^s ib_i$, $j=\sum_{i=3}^sb_i$ and 
\[\PP\{X_k\supseteq A\}= \frac{\binom{n_k -a}{cm_k^\beta-a}}{\binom{n_k}{cm_k^\beta}}\]
and furthermore the number of such collections is bounded above by
\[\prod_{i=3}^s \binom{m_k^{i-1}}{b_i}\]
because by $2$-optimality, the number of cycles of length $2i$ containing $r$ is bounded above by the number of right-vertex sequences containing $r$ namely $m_k^{i-1}$ and we select $b_i$ such sequences for each $i$. 
By the union bound we thus conclude that
\[\PP(N^j(r))\leq \sum_{(b_3,\dots,b_s): \sum_i b_i=j} \frac{\binom{n_k -a}{cm_k^\beta-a}}{\binom{n_k}{cm_k^\beta}}\prod_{i=3}^s \binom{m_k^{i-1}}{b_i}\]
We analyze the asymptotic behavior of this quantity as $k\rightarrow \infty$ when $j\leq \eta m_k^{\beta-1}$. In particular $\sum_{i=3}^3 b_i=j$ implies that $b_i\ll m_k^{i-1}$ while $a=\sum_{i=3}^s ib_i$ implies that $a\leq s\max(b_i)\ll m_k^\beta\ll m_k^2$. These inequalities allow us to use the following standard approximations to binomial coefficients,
\[\frac{\binom{n_k -a}{cm_k^\beta-a}}{\binom{n_k}{cm_k^\beta}} \sim \left(\frac{cm_k^\beta}{m_k^2}\right)^{a} \text{ and } \binom{m_k^{i-1}}{b_i}\sim \frac{(m_k^{i-1})^{b_i}}{b_i!}\]
It turns out the resulting asymptotic expression can be computed explicitly and has a particularly simple form. By letting $p_i:= c^i m_k^{i(\beta-1)-1}$ for  $i=3,\dots, s$ and $\overline{p_i}:=p_i/\overline{p}$ where $\overline{p}=\sum_{i=3}^sp_i$ we can write
\[
\sum_{(b_3,\dots,b_s): \sum_i b_i=j} \left(\frac{cm_k^\beta}{m_k^2}\right)^{a}\prod_{i=3}^s \frac{(m_k^{i-1})^{b_i}}{b_i!} = \frac{1}{j!}\sum_{(b_3,\dots,b_s): \sum_i b_i=j} \binom{j}{b_3, b_4, \dots, b_s} \prod p_i^{b_i}=\]
\[=\frac{\overline{p}^j}{j!} \sum_{(b_3,\dots,b_s): \sum_i b_i=j} \binom{j}{b_3, b_4, \dots, b_s} \prod \overline{p_i}^{b_i} = \frac{\overline{p}^j}{j!}
\]
where the last equality occurs because the sum of the $b_i$ runs over all nonnegative indices summing to $j$ and the multinomial is a probability distribution so its probability mass function sums to one.    
Finally, we will use the fact that $j=\eta m_k^{\beta-1}$ to prove that the probability goes to zero as $k\rightarrow \infty$. By Stirling's inequality $1/j!\leq (\frac{e}{j})^j$
\[\frac{\overline{p}^j}{j!}\leq \left(\frac{e\overline{p}}{\eta m_k^{\beta-1}}\right)^j\leq \left(c^s\frac{e}{\eta} m_k^{(s-1)(\beta-1)-1}\right)^j\]
which decreases exponentially as $m_k\rightarrow \infty$ because the exponent $(s-1)(\beta-1)-1<0$ by our assumption that $\beta<1+\frac{1}{s-1}$. Since the decrease is exponential in $m_k$ the same is true if one sums over all the $m_k^2$ right vertices $r$ completing the proof of the Theorem.
\end{proof}

\begin{remark}\label{remark_stronger_exponent} The previous proof also shows that whenever the stricter condition $1<\beta<1+\frac{1}{s}$ holds the probability that likelihood that any right vertex is altered more than once vanishes (case $j=2$). Consequently, under this condition, the final degree distribution should be almost identical to the hyper-geometric in the asymptotic case. 
\end{remark}

\begin{theorem} \label{thm: rmin_main} Let $(G_k)_{k \in \NN}$, $G_k = (L_k \sqcup R_k,E_k)$ be a family of $2$-optimal, $d$-left regular and $\delta$-right regular graphs such that $|L_k| = n_k \sim m_k^2 = |R_k|^2$. For each $k$, let $X_k \subseteq L_k$ be a uniformly chosen random subset of size $\left\lfloor c\cdot m_k^{\beta}\right \rfloor<n_k$, let $H_k$ be the induced bipartite subgraph with vertices $X_k\sqcup N(X_k)$ and $T_k$ the subgraph resulting from successively removing all the left-vertices of some cycle of size $\leq 2s$. The minimum right-degree $d_{T_k}^*$ of $T_k$ satisfies 
    \[\lim_{k\rightarrow \infty}\Pr\left(d_{T_k}^* < \frac{1}{2} d \cdot cm_k^{\beta-1}\right)= 0\]
\end{theorem}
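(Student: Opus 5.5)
The plan is to combine Lemma~\ref{lem: rmin_selection} (selection phase) with Lemma~\ref{lem: rmin_alterations} (alterations phase) through a union bound. Here $1<\beta<1+\frac{1}{s-1}$, which is the regime in which both lemmas apply. Write $\mu_k = c\,d\, m_k^{\beta-1}$ for the expected right-degree after selection.

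For every right vertex $r$, the final degree satisfies
\[d_{T_k}(r)\geq d_0(r) - (\text{number of edges at } r \text{ removed during alterations}).\]
Fix small constants $\eta_1,\eta_2>0$ with $(1-\eta_1)c\,d - 2\eta_2 \geq \frac12 c\,d$; for instance $\eta_1=\frac14$ and $\eta_2=\frac{cd}{8}$. On the event
\[E_k:=\{d_0^*\geq (1-\eta_1)\mu_k\}\cap\{\text{no right vertex loses at least } 2\eta_2 m_k^{\beta-1} \text{ edges in the alterations}\}\]
every right vertex that was not isolated keeps degree at least $(1-\eta_1)c\,d\,m_k^{\beta-1}-2\eta_2 m_k^{\beta-1}\geq \frac12 c\,d\,m_k^{\beta-1}$. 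Hence $\Pr(d^*_{T_k}<\frac12 d c m_k^{\beta-1})\leq \Pr(E_k^c)$. By the union bound it then suffices to show that each of the two complementary events has vanishing probability.

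The first term tends to zero by Lemma~\ref{lem: rmin_selection}(1) with $a=(1-\eta_1)cd<cd$. For the second term, Lemma~\ref{lem: rmin_alterations} with $\eta=\eta_2$ says that the probability that \emph{any} execution of the alterations removes at least $2\eta_2 m_k^{\beta-1}$ edges from some right vertex tends to zero. Its proof already sums over all right vertices and controls every left-vertex-disjoint family of removed cycles simultaneously. So whichever order the cycles are removed in when producing $T_k$, the bound applies.

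Two details need checking, and I expect them to be the main obstacle. First, each removed cycle lowers the degree of $r$ by exactly $2$. This requires that a removed cycle passes through $r$ at most once, which holds because the cycles are simple. It also requires that removing a cycle's left vertices which are adjacent to $r$ but not on the cycle is accounted for. Such removals happen: a left vertex of the cycle may be adjacent to $r$ without using $r$ in the cycle. To handle this I would use that each removed left vertex adjacent to $r$ lies in some short cycle of $H_k$. The disjoint-family union bound in Lemma~\ref{lem: rmin_alterations} can then be rerun with "cycle containing a neighbor of $r$" in place of "cycle through $r$". This changes only the cycle count by a factor of $\delta \sim d\,m_k$ per cycle, which is compensated because the cycle no longer needs to pass through $r$. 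Alternatively, the paper's observation after Algorithm~\ref{alg: generator} that shortest cycles are chordless can be used, so that the removal of edges at $r$ is charged at most $2$ per cycle.

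Second, the theorem does not require $r$ to remain in $N(X_k)$. Right vertices that become isolated are presumably dropped from $T_k$, and on $E_k$ none does, since their degree stays positive.
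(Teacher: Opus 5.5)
Your proposal is correct and follows the paper's route: the paper's proof is just the union bound combining Lemma~\ref{lem: rmin_selection}(1) with Lemma~\ref{lem: rmin_alterations}, and your choice $\eta_1=\frac14$, $\eta_2=\frac{cd}{8}$ makes that combination explicit. Your first ``detail'' is a genuine point that the paper's proof of Lemma~\ref{lem: rmin_alterations} overlooks, since it counts only cycles through $r$ although a removed cycle that merely contains a left neighbor of $r$ also lowers $r$'s degree; your repair is sound, because cycles of length $2i$ through some neighbor of $r$ number at most $\delta d^2 m_k^{i-2}\sim d^3 m_k^{i-1}$ and each costs $r$ at most $s$ edges, so only constants change in the bound $\bigl(C\,m_k^{(s-1)(\beta-1)-1}\bigr)^j$.
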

\begin{proof} Follows from combining the conclusions of Lemma~\ref{lem: rmin_selection} and Lemma~\ref{lem: rmin_alterations}.
\end{proof}

\section{Transfer bounds for expansion constants}

Suppose that either from theory or from exhaustive exploration we obtain a lower bound on one expansion constant $\alpha_G(s)$ and ask whether this leads to improved lower bounds on $\alpha_G(h)$ for larger sets, of size $h\geq s$. Our main result is that this is indeed the case. We call the resulting inequalities \emph{transfer bounds} since expansion is transferred from smaller to larger sets.

We begin with some preliminary notions. If $X\subseteq L$ is any set of size $h$, we let $H_X$ denote the bipartite subgraph of $G$ with vertices $X\sqcup N(X)$ and all edges of $G$ starting at points of $X$. Each right vertex of $H_X$ has a degree $\leq h$ and the sequence of right-degrees is summarized by the sequence of integers $(\beta_1,\dots, \beta_h)$ with
\[\beta_j:= \left|\left\{r\in N(X): \text{ $r$ has degree $j$ in $H_X$}\right\}\right|.\]
It is immediate that $|N(X)|=\sum_{j=1}^h\beta_j$ and, by double-counting the edges of $H_X$, that the equality
$\sum_{j=1}^h j\beta_j = dh$ holds.

The results in this Section depend on the crucial insight, introduced in~\cite{CCLO} that knowing the expansion constants of smaller sets leads to additional linear inequalities that the $\beta_j$ must satisfy for any set $X$ of size $h$ allowing us to use tools from linear optimization to derive bounds for $\alpha_G(h)$. More precisely, 

\begin{lemma}\label{lem: linbound} Let $s,h$ be positive integers with $s\leq h$ and for $j=1,\dots, h$ define $p_j:=1-\frac{\binom{h-j}{s}}{\binom{h}{s}}$. If $\gamma^*(h)$ is the optimal value of the linear optimization problem 
\[\min_{x\in\RR^h}
\left\{
\sum_{i=1}^h x_i :
x_j\ge0,\;
\sum_{j=1}^h x_j p_j \ge s\alpha(s),
\;
\sum_{j=1}^h jx_j = dh
\right\}.\] 
then the inequality $\alpha_G(h)\geq \gamma^*(h)/h$ holds. 
\end{lemma}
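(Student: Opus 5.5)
The plan is to show that for every $X\subseteq L$ with $|X|=h$, the degree profile $(\beta_1,\dots,\beta_h)$ of $H_X$ is a feasible point of the linear program. Then
\[
|N(X)|=\sum_j\beta_j\ge\gamma^*(h),
\]
so $|N(X)|/|X|\ge \gamma^*(h)/h$ for sets of size exactly $h$. Sets of size $t<h$ need separate care: $\alpha_G(h)$ is a minimum over all $0<|X|\le h$. I would handle them either by reading the lemma's $\alpha_G(h)$ as referring to sets of size exactly $h$, as the linear program suggests, or by noting the monotonicity used later in the transfer theorem. Nonnegativity of the $\beta_j$ is clear, and the equality $\sum_j j\beta_j=dh$ is the double count of edges already observed in the text. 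So the only real content is the inequality $\sum_j p_j\beta_j\ge s\,\alpha_G(s)$.

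To prove that inequality I would average over subsets. Let $Y$ be a uniformly random $s$-subset of $X$; this uses $s\le h$. Since $|Y|=s\le h\le n$ and $Y$ is nonempty, the definition of $\alpha_G(s)$ gives
\[
|N(Y)|\ge s\,\alpha_G(s)
\]
pointwise, so also $\EE|N(Y)|\ge s\,\alpha_G(s)$. On the other hand, write $\EE|N(Y)|=\sum_{r\in N(X)}\PP(r\in N(Y))$. A right vertex $r$ of degree $j$ in $H_X$ lies in $N(Y)$ unless $Y$ avoids all $j$ of its neighbours in $X$. That happens with probability $\binom{h-j}{s}/\binom{h}{s}$, interpreted as $0$ when $h-j<s$. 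Hence $\PP(r\in N(Y))=p_j$, and grouping vertices by degree gives
\[
\EE|N(Y)|=\sum_{j=1}^h p_j\beta_j.
\]
Combining the two expressions gives the desired constraint. So $\beta$ is feasible, and the minimum $\gamma^*(h)$ is at most $\sum_j\beta_j=|N(X)|$.

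The main obstacle is conceptual rather than computational: identifying that the constraint comes from averaging $|N(Y)|$ over $s$-subsets and computing the inclusion probability exactly. That computation is a one-line hypergeometric count. Beyond that, I only need to check the boundary conventions: that the binomial vanishes when $h-j<s$, so $p_j=1$, and that $\alpha_G(s)$ is a minimum over sets of size at most $s$, so it bounds sets of size exactly $s$. I expect no further difficulty.
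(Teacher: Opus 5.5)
Your core argument is the paper's proof. You show the degree profile $(\beta_1,\dots,\beta_h)$ of $H_X$ is feasible by averaging $|N(Y)|$ over uniform $s$-subsets $Y\subseteq X$, using $\PP(r\in N(Y))=p_j$ for a right vertex of degree $j$, and conclude $|N(X)|=\sum_j\beta_j\ge\gamma^*(h)$.

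You are also right that sets of size $t<h$ enter the definition of $\alpha_G(h)$. The paper's proof passes over this too: it ends with ``since $X$ was an arbitrary set of size $h$''. However, neither of your remedies closes the gap.
\begin{itemize}
\item Restricting to $|X|=h$ changes the statement.
\item The transfer theorem proves no monotonicity of $\gamma^*(t)/t$; it only uses this lemma. The fact that its final bound $d-\frac{h-1}{s-1}(d-\alpha_G(s))$ decreases in $h$ rescues that theorem, but not this lemma. The reason is that $\gamma^*(h)/h$ can strictly exceed that bound: for $s=2$, $h=3$, $\alpha_G(2)=d/2$ one gets $\gamma^*(3)/3=d/3>0$.
\end{itemize}

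Write $p^{(t)}_j$ for the coefficients of the program with $h$ replaced by $t$ (``level $t$''). What you actually need are two facts.
\begin{itemize}
\item[(i)] $\gamma^*(s)\le s\,\alpha_G(s)$. This holds because all $p^{(s)}_j=1$, and since $d/s\le\alpha_G(s)\le d$ there is some $x\ge0$ with $\sum_j x_j=s\alpha_G(s)$ and $\sum_j jx_j=ds$.
\item[(ii)] $\gamma^*(t+1)/(t+1)\le\gamma^*(t)/t$ for $t\ge s$. This holds because if $x$ is feasible at level $t$, then $\tfrac{t+1}{t}x$, padded with a zero, is feasible at level $t+1$.
\end{itemize}

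Feasibility in (ii) reduces to $(t+1)p^{(t+1)}_j\ge t\,p^{(t)}_j$. Condition on whether the extra element lies in the random $s$-set. This turns the inequality into $sP_{s-1}\ge(s-1)P_s$, where $P_r$ is the probability that a uniform $r$-subset of $[t]$ meets a fixed $j$-set. That follows because $r\mapsto P_r$ is concave with $P_0=0$: its increments $(1-P_{r-1})\,j/(t-r+1)$ are non-increasing.

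With (i) and (ii) the small sets are handled:
\begin{itemize}
\item a set of size $s\le t\le h$ gives $|N(X)|/t\ge\gamma^*(t)/t\ge\gamma^*(h)/h$;
\item a set of size $t<s$ gives $|N(X)|/t\ge\alpha_G(s)\ge\gamma^*(s)/s\ge\gamma^*(h)/h$.
\end{itemize}
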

\begin{proof} Suppose $X\subseteq L$ is any subset of size $h$ and let $(\beta_1,\dots, \beta_h)$ be the right-degree counts of $H_X$ as in the previous paragraph. We want to estimate the average number of right neighbors of a subset $S\subseteq X$ of size $s$ if the set is chosen uniformly at random. So,
\[\EE[|N(S)|]=\EE\left[\sum_{r\in N(X)} 1_{r\in N(S)}\right]=\sum_{r\in N(X)} \PP\{r\in N(S)\}\]
Since $S$ is chosen uniformly at random this probability is completely determined by the right-degree of $r$. If $r$ has right-degree $j$ then the probability is $p_j:=1-\frac{\binom{h-j}{s}}{\binom{h}{s}}$ because $r\not\in N(S)$ precisely when $S$ is entirely contained in the set of $h-j$ elements of $X$ that are not neighbors of $r$.
We conclude that
\[\EE[|N(S)|]=\sum_{j=1}^hp_j\beta_j\]
and, because the average is always bounded below by the minimum, that the inequality
\[\alpha_G(s)s\leq \EE[|N(S)|] = \sum_{j=1}^hp_j\beta_j\]
holds proving that $(\beta_1,\dots, \beta_h)$ is a feasible point of the above optimization problem.
It follows that $|N(X)|=\sum_{j=1}^h \beta_j\geq \gamma^*(h)$ proving that $\alpha_G(h)\geq \gamma^*(h)/h$  since $X$ was an arbitrary set of size $h$.\end{proof}

We are now ready to prove the main result of this section. In the proof we will make repeated use of the binomial identity
\[\binom{n}{k} = \binom{n-1}{k-1}+ \binom{n-1}{k}.\]

\begin{proof}[Proof of Theorem~\ref{Thm: transfer}] The dual of the linear programming problem in Lemma~\ref{lem: linbound} is equivalent to the following two-dimensional problem
\[\max_{(\eta_1,\eta_2)\in \RR^2, \eta_1\geq 0}\left\{ \eta_1(s\alpha(s))+\eta_2(dh) : \eta_1p_j+\eta_2j\leq 1\text{ for j=1,\dots, h}\right\}\] 
First we will show that the point $(\eta_1^*,\eta_2^*)$ where the first two inequalities are active, that is the solution of the linear system
\[ 
\begin{cases}
\eta_1^*p_1+\eta_2^*1 = 1 \\
\eta_1^*p_2+\eta_2^*2 = 1 \\
\end{cases}\]
is a feasible point of the dual. Solving the linear system we conclude that $\eta_1^*:=\frac{1}{2p_1-p_2}$ and that $\eta_2^*:=\frac{p_1-p_2}{2p_1-p_2}$. The binomial identity above implies that $\eta_1^* = \binom{h}{s}/\binom{h-2}{s-2}$ and in particular that it is a positive number. Furthermore we claim that for every $j\geq 2$ the inequality 
\[\eta_1^*p_{j+1}+\eta_2^*(j+1) < \eta_1^*p_{j}+\eta_2^*j\]
holds, finishing the proof of feasibility.
This inequality is equivalent to the inequality 
\[p_2-p_1 = -\frac{\eta_2^*}{\eta_1^*} > p_{j+1}-p_j.\]
Since $p_{j+1}-p_{j} = \binom{h-j-1}{s-1}/\binom{h}{s}$ this inequality is in turn equivalent to 
\[\binom{h-2}{s-1} > \binom{h-(j+1)}{s-1}\]
which clearly holds because $j\geq 2$. Since the value of the dual objective function at any feasible point of the dual is a lower bound on the optimal function of the primal we conclude that 
\[\alpha_G(h)h\geq \eta_1^*(s\alpha(s))+\eta_2^*(hd).\]
We complete the proof by evaluating the right-hand side. Simplifying the binomial ratios above we see that $\eta_1^*=\frac{h(h-1)}{s(s-1)}$ and $\eta_2^*=\frac{s-h}{s-1}$. Dividing both sides of the previous inequality by $h$ we conclude that
\[\alpha_G(h)\geq d\frac{s-h}{s-1}+\frac{h-1}{s-1}\alpha_G(s)=d-\frac{h-1}{s-1}\left(d-\alpha_G(s)\right)\]
as claimed.
For the final inequality, if $G$ is $s$-optimal then we apply the transfer inequality we just showed with $\alpha_G(s)=d-1+\frac{1}{s}$ obtaining 
\[\alpha_G(h)\geq d-\frac{h-1}{s-1}\left(-1+\frac{1}{s}\right) = d-\frac{h-1}{s}\]
as claimed.
\end{proof}

\begin{remark} The complementary slackness conditions $\lambda_i^*x_i^*=0$, $\eta_1^*(p^t x^*-s\alpha(s))=0$ and $\eta_2^*\left(\sum jx_j^*-hd\right)=0$ imply that setting $x_j^*=0$ for $j\geq 3$ and defining $(x_1^*,x_2^*)$ as the solution of the system of equations 
\[ 
\begin{cases}
x_1^*+2x_2^* = hd \\
x_1^*p_1+x_2^*p_2 = s\alpha_G(s) \\
\end{cases}\]
leads to an optimal solution of the primal problem with the same objective function as the point $(\eta_1^*,\eta_2^*)$ showing that it is not only feasible but in fact optimal for the dual.
\end{remark}

\section{Post-quantum key exchange and optimal expanders}
\label{Sec: PQ}

In this Section we develop an extended application of small-set expanders. We begin by recalling some preliminaries from coding theory and cryptography.

As in the introduction, let $B(G)$ denote the binary code defined by a bipartite graph $G=L\sqcup R$ which is left-regular of degree $d$. The following Lemma summarizes well-known results of Sipser and Spielman for the code $B(G)$, rephrased in terms of the expansion constants of $G$. 

\begin{lemma}\label{lem: sisp} The following statements hold for any integer $h$ with $0<h<n$:
\begin{enumerate}
\item If $\alpha_G(h)>\frac{d}{2}$ then $h$ is a lower bound on the minimum distance of $B(G)$.
\item If $\alpha_G(h)>\frac{3d}{4}$ then the Sipser-Spielman bit flipping algorithm can correct any $\frac{h}{2}$ errors in linear time.
\end{enumerate}
\end{lemma}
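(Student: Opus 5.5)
The plan is to reduce both statements to the standard Sipser--Spielman counting arguments, using the definition of $\alpha_G(h)$ as a lower bound on $|N(X)|/|X|$ for every nonempty $X\subseteq L$ with $|X|\leq h$. The key tool is the notion of \emph{unique neighbors}. For $X\subseteq L$, let $U(X)\subseteq N(X)$ be the set of right vertices with exactly one neighbor in $X$. Double-counting the $d|X|$ edges leaving $X$ gives
\[|U(X)| + 2\bigl(|N(X)|-|U(X)|\bigr)\leq d|X|,\]
and hence $|U(X)|\geq 2|N(X)|-d|X|$. If $0<|X|\leq h$, this becomes $|U(X)|\geq (2\alpha_G(h)-d)|X|$.

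\textbf{Part (1).} Take a nonzero codeword $x\in B(G)$ and let $X$ be its support. Suppose for contradiction that $|X|\leq h$. Since $\alpha_G(h)>d/2$, the bound above gives $|U(X)|>0$, so some right vertex $r$ has exactly one neighbor in $X$. Then the parity check $\sum_{j\in N(r)}x_j$ equals $1$, contradicting $x\in B(G)$. Therefore $|X|>h$, so every nonzero codeword has weight at least $h+1$, and in particular the minimum distance is at least $h$.

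\textbf{Part (2).} I will follow Sipser--Spielman's analysis of the bit-flipping algorithm: repeatedly flip any bit for which more than half of its $d$ checks are unsatisfied. Let $E$ be the current error set and suppose $|E|\leq h/2$. The potential is the number of unsatisfied checks. Every vertex of $U(E)$ is an unsatisfied check, so while $0<|E|\leq h$ we have
\[\#\text{unsat}\geq |U(E)|>(2\cdot\tfrac{3d}{4}-d)|E|=\tfrac d2|E|.\]
By averaging, some $v\in E$ has more than $d/2$ unsatisfied neighbors, so a flippable vertex exists. Each flip strictly decreases the number of unsatisfied checks, and this number starts at most $d|E|$, so the algorithm runs linearly many steps. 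Each step can be implemented in constant amortized time with bucket lists, since degrees are bounded, which gives linear time.

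The step I expect to be the main obstacle is showing that $|E|$ never exceeds $h$ during the run, so that the expansion bound remains applicable throughout. Initially the number of unsatisfied checks is at most $d|E_0|\leq dh/2$, and it decreases monotonically. Each flip changes $|E|$ by one, so if $|E|$ ever exceeded $h$ there would be a first moment with $|E|=h$. At that moment the bound gives $\#\text{unsat}>\frac d2 h$, contradicting the monotone bound $\#\text{unsat}\leq dh/2$. Hence $|E|<h$ throughout. The algorithm halts only when no bit can be flipped, and by the argument above this forces $E=\emptyset$, so all errors have been corrected.
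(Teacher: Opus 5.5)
Your proposal is correct and is essentially the paper's proof, which simply cites Sipser--Spielman's Theorems 7 and 10. You reconstruct exactly those arguments: the unique-neighbor bound $|U(X)|\geq 2|N(X)|-d|X|$ for the distance, and, for bit flipping, the monotone count of unsatisfied checks together with the contradiction at the first time $|E|=h$. One caveat, shared with the paper since Sipser--Spielman assume bounded degrees on both sides: here only the left degree is bounded, so ``degrees are bounded'' does not justify constant cost per flip, because each flip updates $\sum_{r\in N(v)}\deg(r)$ counters and right degrees may grow. The bound of at most $d|E_0|$ flips holds regardless.
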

\begin{proof} Part $(1)$ is~\cite{SiSp}[Theorem 7].$(2)$ follows from the proof of~\cite{SiSp}[Theorem 10]. 
\end{proof}
\begin{remark}
The bit-flipping algorithm is so simple that we cannot resist giving a brief description. Given $w=(x_1,\dots, x_n)\in \FF_2^{n}$ we carry out the following procedure:
\begin{enumerate}
\item Find a variable $x_i\in L$ that has strictly more unsatisfied than satisfied constraints and flip its value (replacing $x_i$ by $1+x_i$ in $\FF_2$).
\item Repeat until no such variable remains. 
\end{enumerate}
If $w=c+e$ where $c\in B(G)$ and $e$ has weight $\leq h/2$ then the algorithm is guaranteed to recove $c$ whenever $\alpha_G(h)>\frac{3d}{4}$. Furthermore, this algorithm runs in linear time (a consequence of \cite{SiSp}[Theorem 10]).
\end{remark}

Next we will use transfer inequalities and the above guarantees to argue that the codes derived from $s$-optimal expanders are potentially useful in code-based cryptography. We begin with a description of the basic problem and then prove Theorem~\ref{thm: PQK} which contains the main application.

We will now Alice and Bob wish to exchange data securely over an open channel. They can easily do so using established symmetric encryption protocols provided they both know a very large integer $K$, which we refer to as the \emph{shared key}, to be used by both parties during symmetric encoding and decoding.  Note however that any third party with knowledge of $K$ could easily read the communications between Alice and Bob.
A key-sharing protocol is a mechanism through which Alice and Bob generate a shared $K$ \emph{securely} while \emph{communicating only through the open channel}. More specifically, the following protocol is a
Niederreiter-style code-based KEM, following the general
paradigm used in modern code-based cryptography such as
BIKE \cite{BIKE20}:

\begin{enumerate}
\item Alice constructs a left-regular bipartite graph with parameters $(n,m,d)$ via its parity-check $m\times n$ matrix $H$. She also constructs and a random invertible $m\times m$ matrix $S$. Alice publishes the product $R:=SH$ and keeps the pair $(S,H)$ secret.
\item Bob generates (uniformly at random) an error vector $e$ with support size $t$. Bob publishes $c:=Re$ and the support size $t$, and keeps the vector $e$ secret.
\item Alice reads $c$, computes $S^{-1}c = He$ and uses the error-correction algorithm for the code $H$ to recover the vector $e$.

\item If step $(3)$ is successful then both Alice and Bob know $c$ and $e$ and can independently compute the key $K:={\rm Hash}(c;e)$ where ${\rm Hash}$ is any previously established hash function.
\end{enumerate}
 
\begin{remark} An eavesdropper who has access to all communications between Alice and Bob would have to be able to compute $e$ from $c$ and $R$ in order to obtain the key. The eavesdropper must try to do so knowing only the "scrambled" version $R$ of the code $H$. It is known that finding the nearest vector in a subspace $V\subseteq \FF_q^n$ to a given vector $w$ is a difficult computational problem if $V$ is sufficiently generic \cite{Ber}[Sect-III] . The hope is that the scrambled code resembles, in the eyes of an eavesdropper, a generic code sufficiently so that decoding becomes difficult.

The key exchange above reached NIST PQ level three~\cite{BIKE20} as post-quantum cryptography alternative candidate, meaning that it is currently believed to be safe, for a suitable choice of code, even if the eavesdropper is in possession of a quantum computer. \end{remark} 
  
There are a few standard attacks that an eavesdropper could use. Their aim is to recover $e$ from $c$ and $R$ and thus capture the key $K$. \begin{enumerate}
\item \emph{Brute force attack.}  The attacker selects a possible support set $X$ uniformly at random, defines $y:=\sum_{j\in X} e_j$ and verifies whether $Ry=c$. The probability of success is $1/\binom{n}{h}$ so the expected number of attempts before success is $\binom{n}{h}\sim 2^{h\log_2\left(n/h\right)}$.

\item \emph{Information set decoding.} From the public knowledge of $R$ the attacker can compute the dimension $k$ of the code. The attacker selects a random support set $I\subseteq [n]$, $|I|=k$ and attempts to solve the restricted linear system $R_Iy=s$ where $R_I$ is the restriction of the matrix $R$ to the columns indexed by $I$. If $I$ is an information set (i.e. if $R_I$ is invertible) then ${\rm supp}(e)\subseteq I$ implies $y=e$. It follows that the success probability of a given information set $I$ is given by $\binom{k}{h}/\binom{n}{h}\sim 2^{-h\log_2(n/k)}$ so the expected number of attempts before success of this strategy is of the order of $2^{h\log_2(n/h)}$.

\item \emph{Low weight dual constraints attack.} The code is obscured via left multiplication by the random invertible matrix $S$. This obfuscation method however, leaves the dual code $C^{\perp}$ exposed, where 
\[C^{\perp}:=\left\{v\in \FF_2^n: \forall x\in C\left(v^tx=0\right) \right\}\]
because $C^{\perp}={\rm rowspan}(H)={\rm rowspan}(R)$ (see \cite{MWS}[Ch.5]). If $\lambda^tR=v\in C^{\perp}$ is a dual word of low weight then $e$ must satisfy the linear equation $ve=\lambda^tc$ and it is possible to use such equations to decrease the complexity of brute force attacks by searching exhaustively over the part of $e$ with the same support as the short dual words and then piecing together $e$ gradually from these pieces. Although a useful general theoretical description of the dual code is still lacking, certain general rules are practically advisable to mitigate these attacks: The graph should have comparatively large minimum right-degree and few short cycles, because violating either of these conditions would lead to dual words of small weight. In any case scrambling by $S$ makes finding such short words more difficult.
\end{enumerate}

The main result of this section is Theorem~\ref{thm: PQK} below which proves that $s$-optimal codes can be used to simultaneously ensure protection against these standard attacks and Alice's success in the critical step $(3)$, in the key-exchange procedure above.

\begin{lemma}\label{lem: existence} Let $s\geq 2$ be an integer. For each prime power $q$ there exists $j:=j(q,s)$ and an $s$-optimal left regular bipartite graph with parameters $m=q^j$, $d=q+1$ and $n \geq 2m$.
\end{lemma}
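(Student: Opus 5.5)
The plan is to take the line--point incidence graphs of Example~\ref{example: PS} as a $2$-optimal family, apply Theorem~\ref{Thm: s_optimal_existence} to extract an $s$-optimal subgraph with many left vertices, and then pad the right side with isolated vertices until its size is exactly a power of $q$. Padding is harmless. Adding isolated right vertices does not change any left degree, any neighborhood $N(X)$ with $X\subseteq L$, or the girth. So by Theorem~\ref{Thm: girth} it preserves left-regularity of degree $d$ and $s$-optimality.

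\textbf{The family.} Fix $q$ and let $G_k$ have left vertices the lines of $\PP^k(\FF_q)$ and right vertices its points, with incidence edges. Each line has $q+1$ points, so $G_k$ is left-regular of degree $d=q+1$. Two distinct lines meet in at most one point, so $G_k$ has no $4$-cycles. By Theorem~\ref{Thm: girth} with $s=2$, $G_k$ is $2$-optimal.

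Here $m_k=(q^{k+1}-1)/(q-1)$ is the number of points, and $m_k\to\infty$. The number of lines is $n_k=\frac{(q^{k+1}-1)(q^k-1)}{(q^2-1)(q-1)}$, which is of order $q^{2k-2}$. I will double-check the orientation of the table in the Example, since its $n$ and $m$ entries look swapped. Since $m_k$ is of order $q^k$, the ratio $m_k^2/n_k$ is of order $q^2$, which is bounded for fixed $q$. This is the one hypothesis of Theorem~\ref{Thm: s_optimal_existence} that needs checking.

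\textbf{Case $s\geq 3$.} Apply Theorem~\ref{Thm: s_optimal_existence} with $c:=2q+2$. For $k$ large it gives an $s$-optimal, left-regular subgraph $A_k$ of degree $q+1$ with
\[|L(A_k)|\geq (2q+2)m_k\geq c|R(A_k)|.\]
Since $m_k\geq q^k$, we get $|L(A_k)|\geq 2q^{k+1}$, while $|R(A_k)|\leq m_k<q^{k+1}$ because $m_k=1+q+\dots+q^k<q^{k+1}$. Set $j:=k+1$ and add $q^{j}-|R(A_k)|$ isolated right vertices. The result has $m=q^j$, $d=q+1$, $n\geq 2m$, and is still $s$-optimal.

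\textbf{Case $s=2$.} Theorem~\ref{Thm: s_optimal_existence} is stated only for $s\geq 3$, but the subgraph step is unnecessary here. Take $G_k$ itself and pad its $m_k<q^{k+1}$ points up to $q^{k+1}$ right vertices. We then need $n_k\geq 2q^{k+1}$, which holds for $k$ large (for instance $k\geq 3$ suffices, since $n_k$ is of order $q^{2k-2}$).

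\textbf{Main obstacle.} The only real work is verifying the hypotheses of Theorem~\ref{Thm: s_optimal_existence}, namely boundedness of $m_k^2/n_k$ and the $2$-optimality of $G_k$, and choosing $c$ large enough to absorb the factor-$q$ loss from rounding $m_k$ up to a power of $q$. A further point to confirm is that the definition of a left-regular bipartite graph allows isolated right vertices. If it does not, I would instead let the padded right vertices be adjacent to nothing by choosing $j$ with $q^j\leq |R(A_k)|$, or else argue directly with the unused points of $\PP^k$.
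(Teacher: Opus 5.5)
Your proposal is correct and follows the paper's route: apply the random selection with alterations of Theorem~\ref{Thm: s_optimal_existence} to the line--point incidence graphs of $\PP^k(\FF_q)$. Your padding by isolated right vertices, together with the larger constant $c=2q+2$, is legitimate because left-regularity imposes no condition on $R$, so no fallback is needed. It also repairs a point the paper's one-line proof glosses over: with $c=2$ that argument only gives $m$ equal to (at most) $|\PP^j(\FF_q)|=(q^{j+1}-1)/(q-1)$, which is not a power of $q$.

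One small slip: in the case $s=2$, $k\geq 3$ does not suffice when $q\geq 3$ (for $q=3$, $n_3=130<162=2\cdot 3^4$). However, $k\geq 4$ works for every $q$, since then $n_k>q^{2k-2}\geq 2q^{k+1}$.
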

\begin{proof} Apply the random selection strategy in the proof of Theorem~\ref{Thm: s_optimal_existence} with parameter $c=2$ to the line-point correspondences in projective spaces of dimension $j$ from Example~\ref{example: PS}.
\end{proof}

\begin{theorem}\label{thm: PQK} If the $s$-optimal graphs of Lemma~\ref{lem: existence} are used in the key exchange protocol above then the following statements hold:
\begin{enumerate}
\item Alice can recover $e$ in linear time if $|{\rm supp}(e)|\leq \frac{s(q+1)}{8}$ and
\item If $|{\rm supp}(e)|\geq \frac{s(q+1)}{8}$ then the work required by an eavesdropper to do either brute force or information set decoding to recover $e$ from $c$ grows exponentially in the product $sq$.
\end{enumerate}
\end{theorem}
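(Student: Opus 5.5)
Part (1) follows from the transfer bound of Theorem~\ref{Thm: transfer} combined with the Sipser--Spielman guarantee of Lemma~\ref{lem: sisp}(2). Set $d=q+1$ and $h:=\lfloor s(q+1)/4\rfloor$, so that $h/2\geq \lfloor s(q+1)/8\rfloor$. We may assume $h\geq s$; when $q+1<4$ the conclusion follows directly from $s$-optimality for sets of size at most $s$. Since the graph is $s$-optimal, Theorem~\ref{Thm: transfer} gives
\[\alpha_G(h)\geq d-\frac{h-1}{s}> d-\frac{h}{s}\geq (q+1)-\frac{q+1}{4}=\frac{3d}{4}.\]
Lemma~\ref{lem: sisp}(2) then says the bit-flipping algorithm corrects any $h/2$ errors in linear time. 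Alice computes $S^{-1}c=He$, which is the syndrome of $e$, and runs bit flipping on this syndrome. This is legitimate because bit flipping depends only on the syndrome: flipping a variable changes exactly its incident checks. She therefore recovers $e$ whenever $|{\rm supp}(e)|\leq s(q+1)/8$. We should also check $h<n$. This holds because Lemma~\ref{lem: exps} gives $n\geq \sum_{j=0}^{s+1} q^j$, which far exceeds $s(q+1)/4$.

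Part (2) uses the attack complexities estimated above. For brute force, the expected number of attempts is $\binom{n}{w}$ with $w=|{\rm supp}(e)|\geq s(q+1)/8$. We bound this below by $(n/w)^w$. When $w\leq n/2$, this gives at least $2^{w}\geq 2^{s(q+1)/8}$, which is exponential in $sq$. For information set decoding, the expected number of trials is $\binom{n}{w}/\binom{k}{w}\geq (n/k)^w$. Here $k=\dim B(G)\leq n$, and $k\geq n-m$ since $B(G)$ has $m$ parity checks. Without an upper bound on $k$ strictly below $n$, this ratio could be close to $1$. The paper's heuristic treats it as $2^{w\log_2(n/w)}$, and I would follow that convention. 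More rigorously, I would use $\binom{n}{w}/\binom{k}{w}\geq (n/k)^w$ together with a bound such as $k\leq n-\mathrm{rank}(H)$ and an argument that $\mathrm{rank}(H)$ is a constant fraction of $n$. That last claim is unlikely to be provable for $n\geq 2m$ with large $n$, so I would instead present the estimate in the form the paper uses, $2^{w\log_2(n/w)}$. Because $n\gg w$ (indeed $n\geq q^{s+1}$ while $w$ is at most about $sq$ in the relevant regime), the exponent is at least $w\geq s(q+1)/8$.

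I expect the main obstacle to be the rigor of part (2): information set decoding needs a quantitative gap between $k$ and $n$, and the brute force bound needs $w\leq n/2$. I would handle both by restricting to error weights of order $sq$, where $w\leq n/2$ follows from Lemma~\ref{lem: exps}. I would also state the information set decoding conclusion in terms of the paper's heuristic estimate rather than as an unconditional bound. The verification of part (1) is routine once $h$ is chosen with $h/s\leq d/4$.
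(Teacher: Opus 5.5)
Your route is the paper's: part (1) feeds the transfer bound of Theorem~\ref{Thm: transfer} into Lemma~\ref{lem: sisp}(2), and part (2) uses the attack-cost estimates. For $q\geq 3$ your argument is correct. Your choice $h=\lfloor s(q+1)/4\rfloor$ attains the stated radius $s(q+1)/8$, whereas the paper's own write-up ends at $s(q+1)/16$. Your remarks that bit flipping can be run on the syndrome $He$ and that $h<n$ are details the paper omits.

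The gap is the case $q+1<4$, i.e.\ $q=2$, $d=3$, $3d/4=9/4$, which you dismiss as following ``directly from $s$-optimality''. It does not. $s$-optimality gives only $\alpha_G(h)\geq 2+\frac{1}{h}$, which exceeds $9/4$ only for $h\leq 3$. But for $s\geq 6$ the statement allows $|{\rm supp}(e)|=2$, which needs $h\geq 4$. No other choice of $h$ rescues this. Since $n\geq 2m$, the average right degree is at least $6$, so some right vertex $r$ has four neighbours $X$. As $g(G)>4$, these pairwise meet only at $r$, so $|N(X)|=9$ and $\alpha_G(h)\leq\alpha_G(4)\leq 9/4$ for all $h\geq 4$. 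Thus Lemma~\ref{lem: sisp}(2) certifies only single-error correction when $q=2$. The paper's proof invokes the transfer bound only for $h\geq s$ and has the same hole. You should restrict part (1) to $q\geq 3$ (where $h\geq s$ holds automatically) or give a separate, e.g.\ girth-based, analysis of bit flipping for $q=2$.

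For part (2) you match the paper's level of rigour, but your information-set-decoding obstacle comes from the paper's description of the attack, not from the problem itself. In Prange's algorithm for $Re=c$, the error must lie in the $n-k=\mathrm{rank}(H)\leq m$ columns complementary to an information set. The expected number of trials is therefore $\binom{n}{w}/\binom{n-k}{w}\geq (n/m)^w\geq 2^w$ by $n\geq 2m$, with no lower bound on $\mathrm{rank}(H)$ needed. (The paper's step $n/h\geq 8q^j/(s(q+1))$ tacitly assumes $h\leq s(q+1)/8$, so it is no more rigorous than yours.) Your restriction $w\leq n/2$ for brute force is genuinely necessary and is fine.
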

\begin{proof} $(1)$ By Lemma~\ref{lem: sisp}, we know that bit flipping is capable of correcting $h/2$ errors in linear time when  $\alpha_G(h)> 3d/4$. Since the graphs under consideration are $s$-optimal the transfer bounds of Theorem~\ref{Thm: transfer} imply that for any $h \geq s$ we have $\alpha_G(h)\geq d-\frac{h-1}{s}$. We conclude that Alice can recover $h/2=sd/16=s(q+1)/16$ errors uniquely and in particular can recover $e$ in linear time whenever its support is of size at most $s(q+1)/16$. $(2)$ If the support size $h\geq \frac{s(q+1)}{8}$ then the proof of work inequalities in the previous paragraph imply that a brute force attack requires work in the order of $2^{\frac{s(q+1)}{8}}$. 
Since for large $q$ the inequality
\[\frac{n}{h} \geq \frac{8q^j}{s(q+1)}\geq q^{j-2}\]
holds, we also conclude that an information set decoding attack requires work in the order of $2^{\frac{s(q+1)(j-2)}{8}}$ completing the proof. 
\end{proof}

\begin{examplex}
    As a concrete instance of the previous construction, 
     fixing $q = 2$ and $k = 10$, we are able to generate $3$-optimal graphs with minimum right degree $5$ (see figure \ref{fig:histogram}) consistently by picking $\beta = 1.25 < 1.5$ and a left-to-right constant $c = 1.25$. This way we are able to gain dual code robustness with the minimum right degree growing like $\sim \sqrt[4]{m_k}$. Furthermore, this histogram is consistent with remark $\ref{remark_stronger_exponent}$.
\end{examplex}

\begin{figure}
    \centering
    \includegraphics[width=0.5\linewidth]{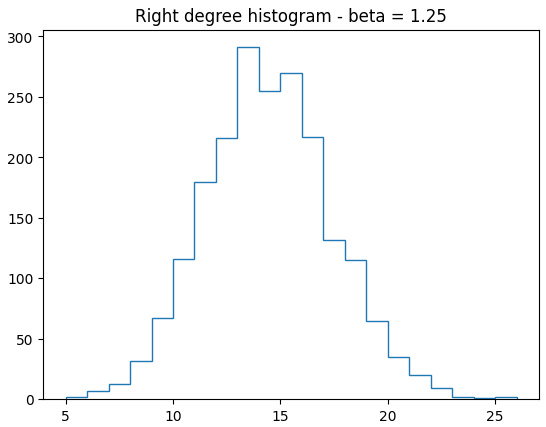}
    \caption{Histogram of degree distribution. $\beta = 1.25, k = 10, q = 2, c = 1.25$. }
    \label{fig:histogram}
\end{figure}

\begin{remark}
    Because of Lemma \ref{lem: exps}, we know the size of the graph will be of the order $(q+1)^{s} = 2^{s\log(q+1)}$ which is quite large as $s,q$ grow. However, note that there is still a significant gap between this and the brute force or information decoding attack $\sim 2^{\Theta(sq)}$. Increasing $q$ thus provides more security much faster than increasing graph size, although naturally this increases the degree which in turn slows down decoding.
\end{remark}

{\bf Acknowledgments.} We thank Val\'erie Gauthier for stimulating our interest in post-quantum cryptography. We thank Mateo Diaz and César Galindo for useful conversations during the completion of this project. M. Fiori, P. Raigorodsky and M. Velasco are partially supported by the Fondo Clemente Estable (ANII, Uruguay) grant FCE-1-2023-1-176172. Tristram Bogart was supported by internal research grant INV-2025-213-3438 from the Faculty of Sciences of the Universidad de los Andes.

\FloatBarrier

\end{document}